 \newtheorem{thm}{Theorem}[section]
 \newtheorem{cor}[thm]{Corollary}
 \newtheorem{lem}[thm]{Lemma}
 \newtheorem{prop}[thm]{Proposition}
 \theoremstyle{definition}
 \newtheorem{defin}[thm]{Definition}
 \theoremstyle{remark}
 \newtheorem{rem}[thm]{Remark}
 \numberwithin{equation}{section}
\newcommand{\n}{\|}
\newcommand{\len}{\left\|}
\newcommand{\pn}{\right\|}
\newcommand{\la}{\left\langle}
\newcommand{\ra}{\right\rangle}
\newcommand{\lee}{\left(}
\newcommand{\p}{\right)}
\newcommand{\inv}[1]{\frac{1}{ #1 }}
\newcommand{\maxsym}{\vee}
\newcommand{\minsym}{\wedge}
\newcommand{\E}{\mathbb{E}}
\newcommand{\R}{\mathbb{R}}
\renewcommand{\P}{\mathbb{P}}
\newcommand{\F}{\mathcal{F}}
\newcommand{\FF}{\mathbb{F}}
\newcommand{\eqtwo}[4]{\left\{\begin{array}{ll} #1 & #2  \\ #3 & #4 \end{array}\right.}
\newcommand{\Ep}[1]{\mathcal{E}_{#1}}
\newcommand{\Ca}[1]{\mathcal{#1}}
\newcommand{\Bo}[1]{\mathbb{#1}}
\newcommand{\norm}[1]{\left\|#1\right\|}
\newcommand{\vecfour}[4]{\left[\begin{array}{ll} #1 & #2  \\ #3 & #4 \end{array}\right]}
\begin{document}

%
%
%
%
%
%
%
%
%

\title[On the equivalence of solutions for an evolution equation]{On the equivalence of solutions for  a class  of stochastic evolution equations  in a Banach space}

\author[M. G\'orajski]{Mariusz G\'orajski}

\address{%
Department of Econometrics,\\ Faculty of Economics and Sociology, \\University of  \L \'od\'z\\
Rewolucji 1905 r. No. 41\\
90-214 \L \'od\'z, Poland
}

\email{mariuszg@math.uni.lodz.pl}

\subjclass{Primary 34K50; Secondary 60H30,  47D06}

\keywords{stochastic evolution equations, \textsc{umd}$^-$ Banach spaces, weak solutions, mild solutions, generalised strong solutions, stochastic partial differential equations with finite delay.}

\date{October 14 , 2013}

\begin{abstract}
We study a class of stochastic evolution equations in a Banach space $E$ driven by cylindrical Wiener process.
 Three different analytical concepts of solutions: generalised strong, weak and mild are defined and the conditions under which they are equivalent are given. We apply this result to prove existence, uniqueness and continuity of weak solutions to stochastic delay evolution equations. We also consider two examples of these equations in non-reflexive Banach spaces: a stochastic transport equation with delay and a stochastic delay McKendrick equation.
\end{abstract}

\maketitle
\renewcommand\theenumi{\roman{enumi}}
\renewcommand\labelenumi{(\theenumi)}
\section*{Introduction}
Let $E$ be a Banach space and let $H$ be a separable Hilbert space. In a given probability basis $\left((\Omega,\F, \mathbb{F},\P),W_H\right)$, i.e. $(\Omega,\F,\P)$ is a complete probability space and $W_H$ is an $H$-cylindrical Wiener process  with respect to a complete filtration $ \mathbb{F}=(\F_t)_{t\geq 0}$ on $(\Omega,\F,\P)$,
 consider the stochastic evolution equation:
\begin{align}\label{SCP}
\left\{ \begin{array}{rll} dY(t)&=AY(t)dt +F(Y(t))dt+ G(Y(t))dW_H(t), &t\geq 0;\\
Y(0)&=Y_0, \end{array}\right. \tag{SCP}
\end{align}
for initial condition $Y_0\in L^0((\Omega,\F_0);E)$. Here $(A,D(A))$ generates a strongly continuous semigroup  $(T(t))_{t\geq 0}$ on a Banach space $\tilde{E}$ such that $D(A)\subset E\subset\tilde{E}$ are continuous and dense embeddings, and nonlinearities $F:D(F)\subset E\rightarrow \tilde{E}$ and $G:D(G)\subset E\rightarrow \Ca{L} (H,\tilde{E})$ are strongly measurable mappings with some regularity properties which we make precise in Section \ref{s:prelim}. 

In the definition of a solution to the stochastic equation \eqref{SCP} one can either fix the probability
basis $\left((\Omega,\F, \mathbb{F},\P),W_H\right)$ on which the process $Y$ lives in advance or make this to be part of a solution.
In the former case the solution $Y$ is usually called a \textit{stochastically strong solution}, whereas in the latter case $Y$ is a martingale or
a weak solution in the probabilistic sense (\textit{stochastically weak solution}). Here we only consider stochastically strong solutions of \eqref{SCP}. For the notion of martingale solution of \eqref{SCP} in a Banach space see \cite{Kunze}, where inter alia the equivalence between weak solutions in the probabilistic sense and local martingale problem is established. 
 
We recall three analytical concepts of solutions to \eqref{SCP}.   

\begin{defin}\label{d:mildsol}
A strongly measurable, $\mathbb{F}$- adapted process $Y$ is called a \emph{mild solution} to \eqref{SCP} if for
all $t> 0$ we have
\begin{enumerate}
    \item there exists the Bochner integral $\int_{0}^{t}T(t-r)F(Y(r))dr$ a.s.;
		\item there exists the stochastic integral $\int_{0}^{t}T(t-r)G(Y(r))dW_H(r)$;
		\item for almost all $\omega$ \begin{align}\label{voc2}
Y(t) &= T(t)Y_0 + \int_{0}^{t}T(t-r)F(Y(r))dr+\int_{0}^{t}T(t-r)G(Y(r))dW_H(r).
\end{align}
\end{enumerate}  
  \end{defin}
We introduce the following definition of weak solution to \eqref{SCP} which is slightly more general than the one considered in \cite{vanNeervenVeraarWeis2} and \cite{DaPratoZabczyk}.

\begin{defin}\label{d:weaksolSCP}
A strongly measurable, $\mathbb{F}$-adapted process $Y$ is called a \emph{weak solution} to \eqref{SCP} if $Y$ is a.s.\ locally Bochner integrable and for
all $t> 0$ and $x^*\in D(A^\odot)$:

\begin{enumerate}
\item $\la F(Y),x^*\ra$ is integrable on $[0,t]$  a.s.;
\item $G^*(Y)x^*$ is stochastically integrable on $[0,t]$;
\item  for almost all $\omega$ 
\begin{align*}
\la Y(t)-Y_0 ,x^*\ra= \int_{0}^{t}\la Y(s),A^\odot x^* \ra ds &+\int_0^t\la F(Y(s)), x^*\ra ds\\&\quad +\int_{0}^{t} G^*(Y(s))x^* dW_H(s).
\end{align*}
\end{enumerate}

\end{defin}

In the following interpretation of solution to \eqref{SCP} we use the theory of stochastic integration in a Banach space as given in \cite{vanNeervenVeraarWeis}. 
\begin{defin}\label{d:strongsolSCP}
A strongly measurable, $\mathbb{F}$-adapted process $Y$ is called a \emph{generalized strong solution} to \eqref{SCP} if $Y$ is a.s.\ locally Bochner integrable and for
all $t> 0$:
\begin{enumerate}
\item $\int_{0}^{t} Y(s)ds \in D(A)$ a.s.,
\item $F(Y)$ is Bochner integrable in $[0,t]$ a.s.,
\item $G(Y)$ is stochastically integrable on $[0,t]$,
\end{enumerate}
and
$$Y(t) - Y_0 = A \int_{0}^{t} Y(s)ds + \int_{0}^{t} F(Y(s)) ds+\int_{0}^{t} G(Y(s)) dW_H(s)\quad a.s.$$
\end{defin}
A process $Y$ satisfying Definition \ref{d:strongsolSCP} is called a \textit{analytically strong solution} to \eqref{SCP} if  in addition $Y(t)\in D(A)$ a.s. for all $t>0$ and $AY$ is locally Bochner integrable (see \cite{DaPratoZabczyk}). The additional condition in the definition of analytically strong solution is not appropriate
for stochastic delay equations (see Remark 4.10 in \cite{coxgorajski})  which we consider in Section \ref{s:SDEaddtive}, thus in this paper we do not focus on this concept of solution.\par
The equivalence of these three interpretations of solution to \eqref{SCP} in Hilbert space has been proved by Chojnowska-Michalik in \cite{chojnowskaMichalik_SDEHilbert} (see also \cite[Theorem 6.5]{DaPratoZabczyk} and \cite[Theorem 9.15]{PeszZab}). 
 For a linear \eqref{SCP} with additive noise in a Banach space the equivalence of weak and mild solution is given in \cite{BrzezniakNeerven_stochconv}, while in~\cite[Theorems 8.6 and 8.10]{vanNeervenISEM} one can find the proof of equivalence of these three concepts of solution. In \cite{veraar_thesis}, the author considers mild, variational and weak solutions of  non-autonomous stochastic Cauchy problems in a \textsc{umd}$^{-}$ Banach  space.  Applying the stochastic Fubini theorem he proves that mild and variational interpretations are identical. Moreover, only for reflexive Banach spaces, using Ito's formula, it is shown in \cite{veraar_thesis} that weak and variational concepts are equivalent. In \cite{coxgorajski}, the authors consider linear stochastic Cauchy problems in a \textsc{umd}$^-$ Banach space and formulate sufficient conditions for equivalence of mild and generalised strong solutions of \eqref{SCP} (see Theorem 3.2 in~\cite{coxgorajski}). In the weak probabilistic setting and in a \textsc{umd} Banach space, assuming continuity of paths in the definitions of solutions and using localization and It\^o's formula in the proofs, the equivalence between analytically weak and mild solutions  is shown by Kunze in \cite[Section 6]{Kunze}. \par

In this paper we prove that the equivalence of Definitions \ref{d:mildsol}-\ref{d:strongsolSCP} is also valid in  \textsc{umd}$^-$ Banach spaces. Sections \ref{s:weakmild} and \ref{s:threesol} show that in the fixed probability basis and without the assumption on the paths continuity three above-defined  concepts of solutions to \eqref{SCP} are equivalent (Theorems \ref{t:varcons}, \ref{t:varcons2}). These theorems are used in \cite[Theorem 4.8]{coxgorajski} and in \cite[Theorems 3.2, 3.6]{GorajskiSDE} to prove Markovian representation of stochastic delay equations in $E\times L^p(-1,0;E)$ for some $p\geq 1$, where $E$ is a type 2 \textsc{umd} Banach space. In Section \ref{s:SDEaddtive} we apply the equivalence from Theorem \ref{t:varcons} to prove the existence, uniqueness and continuity of weak solutions to a class of stochastic delay evolution equations in an arbitrary separable Banach space $E$ (see Proposition \ref{p:}).  \par 
 
It is worth mentioning that it turns out that for stochastic evolution equations with non-additive noise in a \textsc{umd} Banach space which are not type 2 it is convenient to analyse a concept of mild $E_\eta$-solution of \eqref{SCP}. This interpretation is more general than these considered in the article.  The existence, uniqueness and H\"{o}lder regularity results of mild $E_{\eta}$-solution to \eqref{SCP} with $A$ being an analytic generator has been proved in  \cite{vanNeervenVeraarWeis_SEEinUMD}.
Since the delay semigroup is not an analytic semigroup, we can not use these results in Section \ref{s:SDEaddtive}.

In the next section, mainly based on \cite{vanNeervenVeraarWeis}, we present sufficient conditions for the existence of stochastic integral in a \textsc{umd}$^-$ Banach space, and some preliminary lemmas which will be useful in the sequel. 

\section{Preliminaries}
\label{s:prelim}
Here and subsequently, $E, \tilde{E}$ stand for real Banach spaces, $H$ denotes a real separable Hilbert space and $W_H$ is an $H$-cylindrical Wiener process on a given probability space $(\Omega,\F, \mathbb{F},\P)$. The following hypothesis will be assumed. 

\renewcommand\labelenumi{(H0)}
\labelformat{enumi}{(H0)}

\begin{enumerate}
    \item \label{h0} $A:D(A)\subset
\tilde{E}\rightarrow \tilde{E}$ is a generator of a $C_0$-semigroup $(T(t))_{t\geq0}$ on a Banach space  $\tilde{E}$ such that $T(t)\in\Ca{L}(E)$ for all $t>0$ and $D(A)\subset E\subset\tilde{E}$ are continuous and dense embeddings.
\end{enumerate}
In a typical example in which hypothesis \ref{h0} is satisfied, $E=\tilde{E}$ and $(A, D(A))$ generates a strongly continuous semigroup $(T(t))_{t\geq0}$ on $E$ (see also Example 3.2 in \cite{Kunze} ).

In the case where $\tilde{E}$ is not reflexive the adjoint semigroup $(T^{*}(t))_{t\geq 0}$ is not necessary strongly continuous (cf. \cite{EngNag}). However \textit{sun dual semigroup} $(T^{\odot}(t))_{t\geq 0}$ defined as subspace semigroup by $T^{\odot}(t)=T^{*}(t)_{|\tilde{E}^{\odot}}$ defined on   $\tilde{E}^{\odot}=\overline{D(A^*)}$ is strongly continuous (see. 2.6 in \cite{EngNag} and Chapter 1 in \cite{vanNeervenadjoint}). A generator $(A^{\odot},D(A^{\odot}))$ of the sun dual semigroup is given by $A^\odot=A^{*}_{|D(A^{\odot})}$ and $D(A^{\odot})=\{x^*\in D(A^*): A^*x^*\in \tilde{E}^{\odot}\}$.\par 
\begin{lem}\label{l:*weakdense}
For all $n\geq 1$ the set $D((A^\odot)^n)$ separates points in $\tilde{E}$.  
\end{lem}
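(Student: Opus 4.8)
The plan is to reduce the separation property to two density statements and then glue them together. Recall that a subset $V\subseteq\tilde{E}^*$ separates the points of $\tilde{E}$ precisely when, for every nonzero $x\in\tilde{E}$, some element of $V$ does not annihilate $x$; equivalently, the pre-annihilator of $V$ in $\tilde{E}$ is trivial. I would first show that the sun dual space $\tilde{E}^\odot$ separates points, and then that $D((A^\odot)^n)$ is norm-dense in $\tilde{E}^\odot$. Since the evaluation $\la x,\cdot\ra$ is norm-continuous, the separating property then passes from $\tilde{E}^\odot$ to its dense subset $D((A^\odot)^n)$, which is exactly the claim.

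For the first step, fix $\lambda$ in the resolvent set of $A$ (say $\lambda$ real and large). The resolvent $R(\lambda,A^*)=R(\lambda,A)^*$ maps $\tilde{E}^*$ into $D(A^*)\subseteq\tilde{E}^\odot$ and satisfies the duality identity $\la R(\lambda,A)x,x^*\ra=\la x,R(\lambda,A^*)x^*\ra$ for all $x\in\tilde{E}$ and $x^*\in\tilde{E}^*$. If $x\in\tilde{E}$ is annihilated by every element of $\tilde{E}^\odot$, then in particular $\la x,R(\lambda,A^*)x^*\ra=0$ for all $x^*$, whence $\la R(\lambda,A)x,x^*\ra=0$ for all $x^*$, so $R(\lambda,A)x=0$; injectivity of the resolvent forces $x=0$. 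Thus $D(A^*)$, and a fortiori $\tilde{E}^\odot=\overline{D(A^*)}$, separates the points of $\tilde{E}$.

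For the second step I would use that $A^\odot$ generates the strongly continuous semigroup $(T^\odot(t))_{t\geq0}$ on $\tilde{E}^\odot$. For $\mu$ large the resolvent $R(\mu,A^\odot)=R(\mu,A^*)|_{\tilde{E}^\odot}$ maps $\tilde{E}^\odot$ into $D(A^\odot)$ and, by iteration, $R(\mu,A^\odot)^n$ maps $\tilde{E}^\odot$ into $D((A^\odot)^n)$. The standard approximation $(\mu R(\mu,A^\odot))^n x^\odot\to x^\odot$ in norm as $\mu\to\infty$ (valid for any generator of a $C_0$-semigroup, using that $\mu R(\mu,A^\odot)\to I$ strongly together with the uniform boundedness of the operators $\mu R(\mu,A^\odot)$ for large $\mu$, via a telescoping estimate) then shows that $D((A^\odot)^n)$ is norm-dense in $\tilde{E}^\odot$.

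Finally I combine the two steps: given nonzero $x\in\tilde{E}$, the first step furnishes $x^\odot\in\tilde{E}^\odot$ with $\la x,x^\odot\ra\neq0$, and choosing $y_k\in D((A^\odot)^n)$ with $y_k\to x^\odot$ in norm gives $\la x,y_k\ra\to\la x,x^\odot\ra\neq0$, so some $y_k$ does not annihilate $x$. I expect the main subtlety to be the interplay of topologies: the resolvent of $A^*$ sends an arbitrary functional only into $D(A^*)$, and it lands in $D(A^\odot)$ only when one starts inside $\tilde{E}^\odot$, so one genuinely has to pass to the sun dual first and iterate there. Keeping track of which operations respect the norm rather than the weak$^*$ topology, and verifying that the iterated resolvent really lands in $D((A^\odot)^n)$ and not merely in $\tilde{E}^\odot$, is where the argument must be handled with care.
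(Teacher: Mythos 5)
Your argument is correct and follows essentially the same route as the paper: the paper combines norm-density of $D((A^\odot)^n)$ in $\tilde{E}^\odot$ (Engel--Nagel) with weak$^*$-density of $\tilde{E}^\odot$ in $\tilde{E}^*$ (van Neerven), and the latter is equivalent to $\tilde{E}^\odot$ separating points of $\tilde{E}$, which is exactly what your resolvent computation establishes. You have merely supplied self-contained proofs of the two facts the paper cites, so the proposal is a valid, slightly more explicit version of the same argument.
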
 
\begin{proof}
Let $n\geq 1$. By strong continuity $D((A^{\odot})^n)$ is dense in $\tilde{E}^\odot$ (see Proposition 1.8 in \cite{EngNag}), hence it is also $*$-weak dense in $\tilde{E}^\odot$. By  Theorem 1.3.1 in \cite{vanNeervenadjoint} it follows that $\tilde{E}^\odot$ is $*$-weak dense in $\tilde{E}^*$. Thus $D((A^\odot)^n)$ is $*$-weak dense in $E^*$. The last property of $D((A^\odot)^n)$ gives the assertion of the lemma.   
\end{proof}
 In the rest of this paper we assume the hypotheses.  
\renewcommand\theenumi {\Alph{enumi}}
\renewcommand\labelenumi{(H\theenumi)}
\labelformat{enumi}{(H#1)}

\begin{enumerate}
    \item \label{F} $F:D(F)\subset E\rightarrow \tilde{E}$ is strongly measurable, $D(F)$ is dense in $E$ and there exists
     $a\in L^1_{loc}(0,\infty)$ such that for all $t>0$ and $x,y\in D(F)$ we have  $T(t)F(x)\in E$ and
\begin{align*}
&\|T(t)F(x)\|_{E}\leq a(t)(1+\|x\|_E),\\
&\|T(t)\left(F(x)-F(y)\right)\|_{E}\leq a(t)\|x-y\|_E,
\end{align*}
\item \label{G} $G:D(G)\subset E\rightarrow \Ca{L} (H,\tilde{E})$ is $H$-strongly measurable, $D(G)$ is dense in $E$ and there exists $b\in L^2_{loc}(0,\infty)$ such that for all $t>0$ and $x,y\in D(G)$ we have $T(t)G(x)\in \Ca{L}(H,E)$   and
\begin{align*}
&\|T(t)G(x)\|_{\Ca{L}(H,E)}\leq b(t)(1+\|x\|_E),\\
&\|T(t)\left(G(x)-G(y)\right)\|_{\Ca{L}(H,E)}\leq b(t)\|x-y\|_E.
\end{align*} 
\end{enumerate}
\begin{lem}\label{l:FGweak}
If \ref{h0}, \ref{F} and \ref{G} hold, then for all $x^*\in D(A^*)$ there exist constants $C_1(x^*),\ C_2(x^*)>0$ such that  
\begin{align*}
&|\la F(x),x^*\ra| \leq C_1(x^*)(1+\|x\|_E),\\
&|\la F(x)-F(y),x^*\ra| \leq C_1(x^*)\|x-y\|_E,
\end{align*} 
and
\begin{align*}
&\norm{G^*(x)x^*}_H\leq C_2(x^*)(1+\|x\|_E),\\
&\norm{(G^*(x)-G^*(y))x^*}_H\leq C_2(x^*)\|x-y\|_E
\end{align*} 
for all $x,y\in D(G)$.

\end{lem}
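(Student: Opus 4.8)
The guiding idea is that hypotheses \ref{F} and \ref{G} control $F$ and $G$ only after smoothing by the semigroup, and only in the $E$-norm, whereas the lemma asks for a bound on the raw pairing $\langle F(x),x^*\rangle$ computed in $\tilde{E}$. The bridge between the two is precisely the requirement $x^*\in D(A^*)$, which lets one trade a factor of the (possibly singular) semigroup for the bounded functional $A^*x^*$. Concretely, the plan rests on the scalar identity
\[
\langle z,x^*\rangle=\langle T(t)z,x^*\rangle-\int_0^t\langle T(s)z,A^*x^*\rangle\,ds,\qquad z\in\tilde{E},\ t>0,
\]
valid for every $x^*\in D(A^*)$.

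First I would establish this identity. For $z\in D(A)$ the map $s\mapsto\langle T(s)z,x^*\rangle$ is $C^1$ with derivative $\langle AT(s)z,x^*\rangle=\langle T(s)z,A^*x^*\rangle$, so the fundamental theorem of calculus gives it directly. For general $z\in\tilde{E}$ I would approximate by $z_n\in D(A)$ with $z_n\to z$ in $\tilde{E}$ (possible since $D(A)$ is dense) and pass to the limit: the boundary terms converge by continuity of $T(t)$ and of $x^*$, while the integral converges by dominated convergence using $|\langle T(s)z,A^*x^*\rangle|\le Me^{\omega s}\|z\|_{\tilde{E}}\|A^*x^*\|_{\tilde{E}^*}$ uniformly on $[0,t]$.

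With the identity in hand I would integrate it in $t$ over $[0,1]$ (its left-hand side being constant in $t$) to obtain
\[
\langle z,x^*\rangle=\int_0^1\langle T(t)z,x^*\rangle\,dt-\int_0^1\!\!\int_0^t\langle T(s)z,A^*x^*\rangle\,ds\,dt,
\]
and then specialise to $z=F(x)$. Since $T(s)F(x)\in E$ and $E\hookrightarrow\tilde{E}$ continuously, the pairings may be read in the $E$-duality, so \ref{F} yields $|\langle T(s)F(x),x^*\rangle|\le a(s)(1+\|x\|_E)\|x^*\|_{E^*}$ and likewise with $A^*x^*$ in place of $x^*$; as $a\in L^1_{loc}$ the two iterated integrals are finite and the first bound follows with $C_1(x^*)=\big(\int_0^1 a\big)\big(\|x^*\|_{E^*}+\|A^*x^*\|_{E^*}\big)$. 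The Lipschitz estimate is identical with $z=F(x)-F(y)$. For $G$ I would write $\|G^*(x)x^*\|_H=\sup_{\|h\|_H\le1}|\langle G(x)h,x^*\rangle|$, apply the identity to $z=G(x)h\in\tilde{E}$, bound $\|T(s)G(x)h\|_E\le b(s)(1+\|x\|_E)\|h\|_H$ via \ref{G}, and take the supremum over $h$; here $b\in L^2_{loc}\subset L^1(0,1)$ keeps the integrals finite, giving $C_2(x^*)=\big(\int_0^1 b\big)\big(\|x^*\|_{E^*}+\|A^*x^*\|_{E^*}\big)$, and the Lipschitz bound follows verbatim.

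The main obstacle, and the only step requiring real care, is the justification of the key identity for $z\notin D(A)$: the adjoint semigroup $T^*$ need not be strongly continuous, so one cannot simply differentiate $s\mapsto T^*(s)x^*$ in norm. Working throughout at the level of the scalar functions $s\mapsto\langle T(s)z,x^*\rangle$ circumvents this, since only weak$^*$ continuity is ever invoked. A minor point to verify is that $\|x^*\|_{E^*}$ and $\|A^*x^*\|_{E^*}$, understood as the norms of the restrictions to $E$, are finite, which is immediate from the continuous embedding $E\hookrightarrow\tilde{E}$.
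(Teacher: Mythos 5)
Your proof is correct, but it reaches the key estimate by a different identity than the paper does. The paper fixes $\lambda$ large, represents the resolvent as a Laplace transform, $(\lambda I-A)^{-1}F(x)=\int_0^\infty e^{-\lambda t}T(t)F(x)\,dt$, bounds this in $\tilde{E}$ using \ref{F}, and then writes $x^*=(\lambda I-A^*)^{-1}y^*$ so that $\la F(x),x^*\ra=\la(\lambda I-A)^{-1}F(x),y^*\ra$ with $y^*=(\lambda I-A^*)x^*$; the same is done for $G$ via $\sup_{\|h\|\le 1}|\la G(x)h,x^*\ra|$. You instead use the weak fundamental-theorem identity $\la z,x^*\ra=\la T(t)z,x^*\ra-\int_0^t\la T(s)z,A^*x^*\ra\,ds$, averaged over $t\in[0,1]$, which is the same underlying mechanism (trading the raw pairing for pairings of the semigroup orbit against $x^*$ and $A^*x^*$) implemented without resolvents. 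Your route buys two small advantages: it only integrates over a finite interval, so you never need the growth bound on $T(t)$ for large $t$ or the slightly delicate evaluation $a(1)$ that appears in the paper's splitting of $\int_0^\infty$ into $\int_0^1+\int_1^\infty$; and it makes explicit why only weak$^*$ differentiability of $T^*(\cdot)x^*$ is needed, which matters since $\tilde{E}$ may be non-reflexive. The paper's route is shorter once the identity $(\lambda I-A^*)^{-1}=((\lambda I-A)^{-1})^*$ and $D(A^*)=(\lambda I-A^*)^{-1}(\tilde{E}^*)$ are quoted, and it produces the constant in the form $C(x^*)=C\|(\lambda I-A^*)x^*\|_{\tilde{E}^*}$ that is then reused verbatim in the estimate \eqref{3} in the proof of Theorem \ref{t:varcons}, whereas your constant $\bigl(\int_0^1a\bigr)\bigl(\|x^*\|+\|A^*x^*\|\bigr)$ is equivalent but would require a one-line adjustment there. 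All the steps you flag as delicate (extension of the identity from $D(A)$ to $\tilde{E}$ by density, finiteness of the restricted norms via the embedding constant $K_E$, and $b\in L^2_{loc}\subset L^1(0,1)$) are handled correctly.
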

\begin{proof}
In the case where $\tilde{E}=E$ is a Hilbert space see Lemma 9.13 in \cite{PeszZab}. If \ref{h0} holds, then we can repeat the reasoning from the proof of Lemma 9.13. Indeed, by strong continuity of $(T(t))_{t\geq 0}$ and \ref{F} there exists $\lambda>0$ such that for all $x\in D(F)$ we have the following inequality 
\begin{align}
&\notag\len(\lambda I-A)^{-1}F(x)\pn_{\tilde{E}}\leq \int_0^\infty e^{-\lambda t}\len T(t)F(x)\pn_{\tilde{E}}dt\\
&\hspace{1.5cm}\leq K_E\left( \int_0^1a(t)dt+a(1)\int_1^\infty e^{-\lambda t}\len T(t-1)\pn_{\Ca{L}(\tilde{E})}dt\right)\left(1+\len x\pn_E\right)\label{Fweak}\\&\hspace{1.5cm}= C_1(1+\len x\pn_E),\notag
\end{align} 
where $K_E>0$ is a norm of continuous, linear embedding $E\to \tilde{E}$. 
Since $(A,D(A))$ is closed and densely defined on $\tilde{E}$, $(\lambda I-A^*)^{-1}=((\lambda I-A)^{-1})^*$ (cf. B.11-12 in \cite{EngNag}). Hence $D(A^*)=D((\lambda I-A)^*)=(\lambda I-A^*)^{-1}(\tilde{E}^*)$. Then for all $x^*\in D(A^*)$ there exist $y^*\in \tilde{E}^*$ such that $x^*=(\lambda I-A^*)^{-1}y^*$. By \eqref{Fweak} we obtain 
$$|\la F(x),x^*\ra|=\left|\la (\lambda I-A)^{-1}F(x),y^*\ra\right|\leq \n y^*\n_{\tilde{E}^*}C_1(1+\n x\n_E).$$ 
The inequalities for   $G^*$ may be handled in much the same way. 
Applying \ref{G} we conclude that  for every $x\in D(G)$ and all $h\in H$  \begin{align}\label{Fweak2}
\n(\lambda I-A)^{-1}G(x)h\n_{\tilde{E}}&\leq \int_0^\infty e^{-\lambda t}\n T(t)G(x)\n_{\Ca{L}(H,\tilde{E})}dt\n h\n_H\\&\leq C_2(1+\n x\n_E)\n h\n_H,\notag
\end{align} 
where $C_2=K_E\left(\int_0^1b(t)dt+b(1)\int_1^\infty e^{-\lambda t}\n T(t-1)\n_{\Ca{L}(\tilde{E})}dt\right)$.
Hence we obtain for all $x^*=(\lambda I-A^*)^{-1}y^*\in D(A^*)$
\begin{align}\notag
\n G^*(x)x^*\n_H&=\sup_{h\in H,\n h\n\leq 1}\left|[h,G^*(x)x^*]_H\right|=\sup_{h\in H,\n h\n\leq 1}\left|\la G(x)h,x^*\ra\right|\\&\leq \n y^*\n_{\tilde{E}^*}C_2(1+\n x\n_E). \label{Fweak3}
\end{align}
\end{proof} 

In the sequel we use the theory of stochastic integral for $\Ca{L}(H,E)$-valued process as introduced in \cite{vanNeervenVeraarWeis}. For a Banach space with \textsc{umd} property  one may characterise stochastic integrability in terms of $\gamma$-radonifying norm. \textsc{umd} property stands for Unconditional Martingale Difference property and it requires that all $E$-valued, $L^q(\Omega;E)$-convergent  sequences of martingale differences are unconditionally convergent (see \cite{Garling_rmte} and \cite{vanNeervenVeraarWeis}). Throughout the paper, 
 $\gamma(H,E)$ stand for the space of $\gamma$-radonifying linear operators from $H$ to $E$. The space $\gamma(H,E)$ is defined to be the closure of the finite rank operators under the norm   
$$ \len R \pn_{\gamma(H,E)}^2 := \sup_{(h_j)_{j=1}^k} \E\ \len \sum_{j=1}^k \gamma_j R h_j \pn_E^2,$$
 where the supremum is taken over all finite orthonormal systems 
$h=(h_j)_{j=1}^k$ in $H$ and $(\gamma_j)_{j\geq 1}$
is a sequence of independent standard Gaussian random variables. Hence $\gamma(H,E)$  is a separable Banach space.

A $H$-strongly measurable, adapted process $\Psi:[0,t]\times\Omega\to \Ca{L}(H,E)$ such that $\Psi^*x^*\in L^2(0,t;H)$ holds a.s. for all $x\in E^*$  is stochastically integrable with respect to the cylindrical Wiener process $W_H$ if and only if  $\Psi$ represents $\gamma(L^2(0,t;H),E)$-valued random variable $R_{\Psi}$ given by 
\begin{align}\label{R}
\la R_{\Psi}f, x^*\ra=\int_0^t\la\Psi(s)f(s),x^*\ra ds \quad \textrm{a.s},
\end{align}
for every $f\in L^2(0,t;H)$ and for all $x^*\in E^*$ (see \cite[Theorem 5.9]{vanNeervenVeraarWeis} ). For the sake of simplicity we shall say then that the process $\Psi$ is in $\gamma(L^2(0,t;H);E)$ a.s. (see also Lemma 2.5, 2.7 and Remark 2.8 in \cite{vanNeervenVeraarWeis}).
If one wants a stochastic integral $\int_0^t\Psi dW_H$ to be in $L^q(\Omega;E)$ for some $q>1$, then assuming that the process $\Psi$ is scalarly in $L^q(\Omega,L^2(0,t;H))$ (i.e. $\Psi^*x^*\in L^q(\Omega;L^2(0,t;H))$) the $L^q$- stochastic integrability of $\Psi$ can be characterised by the existence of a random variable $\xi\in L^q(\Omega;E)$ such that for all $x^*\in E^*$ 
\begin{align}\label{xi}
\la\xi, x^*\ra=\int_0^t \Psi^*(s)x^*dW_H(s) \text{ in } L^q(\Omega).
\end{align}
 Using decoupling inequalities (see \cite{coxVeraar}) one can prove the Burkholder-Gundy-Davies type inequality :
\begin{align}\label{BDG} \E\ \sup_{s\in[0,t]}\len\int_0^s\Psi(u)dW_H \pn^q_E\eqsim_q\E\ \len R_\psi\pn_{\gamma(L^2(0,t;H),E)}^q
\end{align}
for all $q>0$\footnote{For reals $A,B$ we use the notation $A\lesssim_q B$ to express the fact that there exists a constant $C>0$, depending on $q$, such that $A\leq CB$. We
write $A\eqsim_q B$ if $A\lesssim_q B\lesssim_q A$.}. 

In \cite{Garling_rmte} it is shown that \textsc{umd} property can be characterised in terms of two properties: \textsc{umd}$^-$ and \textsc{umd}$^+$. 
\begin{defin}\label{d:UMD-}
 A Banach space $E$ has \textsc{umd}$^-$ property, if for every $1<q<\infty$ there exists $\beta^-_q>0$ such that for all $E$-valued sequences of $L^q$-martingale differences $(d_n)_{n=1}^N$ and for all Rademacher sequences $(r_n)_{n=1}^N$ independent from $(d_n)_{n=1}^N$ we have the following inequality
\begin{align}\tag{\textsc{umd}$^-$}\label{umd-}\E \ \norm{\sum_{n=1}^{N} d_n}_E^q\leq \beta_q^- \E \ \norm{\sum_{n=1}^{N} r_nd_n}_E^q. 
\end{align}
\end{defin}
 A Banach space $E$ has \textsc{umd}$^+$ property, if the reverse inequality to  \eqref{umd-} holds. Recall that class of \textsc{umd} Banach spaces is in the class of reflexive spaces and includes  Hilbert spaces and $L^q$ spaces for $q\in(1,\infty)$. Moreover, class of \textsc{umd}$^-$ Banach spaces includes also non-reflexive $L^1$ spaces.\par
 To integrate processes with values in \textsc{umd}$^-$ one needs a weaken notion of stochastic integral. In a Banach space $E$ with \textsc{umd}$^-$ property the condition: $\Psi$ is in $\gamma(L^2(0,t;H),E)$ a.s. is sufficient for the process $\Psi$ to be stochastic integrable (cf. \cite[Proposition 3.4]{vanNeervenVeraar_Fub} and \cite[Section 8]{vanNeervenVeraarWeis_SEEinUMD}).   
Moreover,  if $\FF$ is the augmented Brownian filtration $\FF^{W_H}$ and $\Psi^*x^*\in L^q(\Omega;L^2(0,t;H))$, and \eqref{xi} holds for all $x^*\in E^*$, then it follows by the martingale representation theorem that $\Psi$ is $L^q$-stochastically integrable with respect to $W_H$. 

\section{Equivalence of weak and mild solutions}
\label{s:weakmild}
\begin{thm}\label{t:varcons}
In a \textsc{umd}$^-$  Banach space $E$, consider \eqref{SCP} with  hypotheses \ref{h0}, \ref{F} and \ref{G}. Let $Y$ be an $E$-valued strongly measurable, adapted process with almost surely locally Bochner square integrable trajectories. Assume that one of the following conditions holds
\renewcommand\theenumi {\roman{enumi}}
\renewcommand\labelenumi{(\theenumi)}
\begin{enumerate}
\item $E$ is a \textsc{umd} space or $\FF=\FF^{W_H}$, and $\sup_{s\in[0,t]} \E \n Y(s)\n^q_E<\infty$ for all $t\in(0,\infty)$ and some $q>1$;
\item for all $t> 0$ the process:
\begin{align}\label{G1}
u\mapsto T(t-u)G(Y(u))
\end{align}
 is in $\gamma(L^2(0,t;H),E)$ a.s.
\end{enumerate} 
Then $Y$ is a weak solution to \eqref{SCP} if and only if $Y$ is a mild solution to \eqref{SCP}. \\
Moreover, if there exists solution and the hypothesis (i) holds, then $u\mapsto T(t-u)G(Y(u))$ represents an element from $L^q(\Omega;\gamma(L^2(0,t;H),E))$ for all $t>0$.

\end{thm}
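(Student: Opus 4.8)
The plan is to prove both implications by testing the mild/weak equations against functionals $x^* \in D((A^\odot)^n)$, which by Lemma~\ref{l:*weakdense} separate points in $\tilde E$, and then pass back to the full vector-valued identities. Throughout, the two solution concepts differ only in whether we pair with the semigroup propagator $T(t-r)$ (mild) or with the sun-dual generator $A^\odot$ (weak), so the bridge between them is the fundamental identity $\frac{d}{ds}\langle T(t-s)x, x^*\rangle = -\langle T(t-s)x, A^\odot x^*\rangle = \langle x, T^\odot(t-s)A^\odot x^*\rangle$ valid for $x^* \in D(A^\odot)$, together with its integrated (Fubini) form. Lemma~\ref{l:FGweak} guarantees that the scalar processes $\langle F(Y(\cdot)), x^*\rangle$ and $G^*(Y(\cdot))x^*$ are locally integrable resp.\ square-integrable a.s.\ under the assumed square-integrability of $Y$, so all scalar integrals below are well defined.

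\medskip
\emph{Mild $\Rightarrow$ weak.} Starting from \eqref{voc2}, I would fix $x^* \in D((A^\odot)^2)$, apply $\langle\,\cdot\,, A^\odot x^*\rangle$, integrate in $t$ over $[0,s]$, and interchange the order of integration. The key computation is the deterministic and stochastic Fubini theorem: for the drift term one writes
\begin{align*}
\int_0^s \Big\langle \int_0^t T(t-r)F(Y(r))\,dr,\ A^\odot x^*\Big\rangle dt
= \int_0^s \Big\langle F(Y(r)),\ \int_r^s T^*(t-r)A^\odot x^*\,dt\Big\rangle dr,
\end{align*}
and then uses $\int_r^s T^*(t-r)A^\odot x^*\,dt = T^\odot(s-r)x^* - x^*$, which collapses the double integral into the weak-form drift plus a boundary term. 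The analogous manipulation for the stochastic convolution requires the stochastic Fubini theorem in the \textsc{umd}$^-$ setting (available from \cite{vanNeervenVeraar_Fub}, whose $\gamma$-radonifying hypothesis is exactly what assumption (ii), or (i) via the moment bound, supplies); combining the three collapsed terms and recognizing $\int_0^s \langle Y(r), A^\odot x^*\rangle dr$ yields Definition~\ref{d:weaksolSCP}(iii) on the dense separating set, hence for all $x^* \in D(A^\odot)$ by approximation.

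\medskip
\emph{Weak $\Rightarrow$ mild.} Here I would run the argument in reverse. Fix $t>0$ and apply to the weak identity the test functional $s \mapsto T^\odot(t-s)x^*$ for $x^* \in D((A^\odot)^2)$; since this is a $C^1$ path in $\tilde E^\odot$ with derivative $-A^\odot T^\odot(t-s)x^*$, an integration-by-parts (Itô product rule) argument lets me integrate the weak equation against the time-dependent functional. The $A^\odot$ drift term in the weak equation is designed to cancel the $\frac{d}{ds}T^\odot(t-s)$ contribution, leaving precisely $\langle Y(t), x^*\rangle - \langle T(t)Y_0, x^*\rangle$ on one side and the paired convolution integrals $\langle \int_0^t T(t-r)F(Y(r))dr + \int_0^t T(t-r)G(Y(r))dW_H, x^*\rangle$ on the other. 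Because $D((A^\odot)^2)$ separates points of $\tilde E$ (Lemma~\ref{l:*weakdense}) and both sides are genuine $E$-valued (the stochastic integral existing by (ii)/(i)), the scalar equality upgrades to the vector identity \eqref{voc2}.

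\medskip
The main obstacle will be justifying the stochastic Fubini interchange and the time-dependent Itô product rule in a \emph{non-reflexive} \textsc{umd}$^-$ space, where the stochastic integral is only the weakened (scalarly defined) object of \cite{vanNeervenVeraarWeis}: one must verify that the $\gamma(L^2(0,t;H),E)$-membership hypothesis is preserved under the propagator rescaling $T(t-u)$ and under the Fubini rearrangement, and that all pairings with $A^\odot x^*$ remain legitimate since $F,G$ take values in the larger space $\tilde E$ rather than $E$. This is exactly where Lemma~\ref{l:FGweak} and hypotheses \ref{F}, \ref{G} are essential, as they control the $\tilde E^*$-pairings uniformly. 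For the final moment statement, once a solution exists under (i), the $L^q$-integrability of $u \mapsto T(t-u)G(Y(u))$ as a $\gamma(L^2(0,t;H),E)$-valued random variable follows from the characterization \eqref{xi} combined with the Burkholder–Davis–Gundy estimate \eqref{BDG} and the assumed uniform bound $\sup_{s\in[0,t]}\E\|Y(s)\|_E^q < \infty$, via the growth control in \ref{G}.
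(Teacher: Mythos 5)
Your ``mild $\Rightarrow$ weak'' direction is essentially the paper's argument: substitute \eqref{voc2} into $\la\int_0^t Y(s)ds, A^\odot x^*\ra$, apply the deterministic and stochastic Fubini theorems, and use $\int_0^{t-u}T^\odot(s)A^\odot x^*ds = T^\odot(t-u)x^*-x^*$. The genuine gap is in your ``weak $\Rightarrow$ mild'' direction. You propose to pair the weak identity with the time-dependent functional $s\mapsto T^\odot(t-s)x^*$ via an ``integration-by-parts (It\^o product rule) argument.'' That is the classical Hilbert-space route (Chojnowska-Michalik, Da Prato--Zabczyk), but it is precisely what this theorem cannot use: no path continuity of $Y$ is assumed (the weak identity holds for each fixed $t$ on a $t$-dependent null set, so $\la Y(\cdot),x^*\ra$ is not a priori a continuous semimartingale), and an It\^o formula for the scalarly-defined stochastic integral in a non-reflexive \textsc{umd}$^-$ space is not available here --- the paper explicitly contrasts its method with Kunze's, which does use It\^o plus localization but assumes continuous paths and full \textsc{umd}. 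The paper's replacement for It\^o is purely integral: it writes $G^*(Y(u))x^* = G^*(Y(u))T^\odot(t-u)x^* - G^*(Y(u))\int_0^{t-u}T^\odot(s)A^\odot x^*\,ds$, swaps the order of integration by the stochastic Fubini theorem to produce $\int_0^t\int_0^s G^*(Y(u))T^\odot(t-s)A^\odot x^*\,dW_H(u)\,ds$, and then evaluates the inner stochastic integral by applying the weak-solution identity \emph{a second time} with the functional $T^\odot(t-s)A^\odot x^*$ --- this is why $x^*$ must be taken in $D((A^\odot)^2)$, a point your sketch does not account for in this direction. Everything then collapses to \eqref{r5} by elementary Fubini manipulations, with no product rule.

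A secondary inaccuracy: you state that the stochastic Fubini interchange is licensed by the $\gamma$-radonifying hypothesis supplied by (ii) (or (i)). In the paper, stochastic Fubini is applied only to the $H$-valued scalarized processes $\Psi_i^*x^*$, and its hypothesis is the a.s.\ bound $\int_0^t(\int_0^t\|\Psi_i^*x^*\|_H^2du)^{1/2}ds<\infty$, which follows from Lemma \ref{l:FGweak} and the assumed local square integrability of $Y$ alone. Assumptions (i)/(ii) enter only afterwards: (ii) directly, or (i) via the construction of $\xi=Y(t)-T(t)Y_0-\int_0^tT(t-u)F(Y(u))du\in L^q(\Omega;E)$ and Theorem 3.6/Remark 3.8 of \cite{vanNeervenVeraarWeis} (where the \textsc{umd} or $\FF=\FF^{W_H}$ hypothesis is consumed through the martingale representation theorem), to guarantee that the stochastic convolution exists as an $E$-valued integral and to upgrade the scalar identity, via a separating sequence in $D((A^\odot)^2)$ and Krein--Smulyan, to the vector equation \eqref{voc2}. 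Your description of the final moment statement is in the right spirit but should cite this representation argument rather than the Burkholder--Davis--Gundy estimate.
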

\renewcommand\theenumi{\roman{enumi}}
\renewcommand\labelenumi{(\theenumi)}

Before proving the theorem, we formulate some remarks which are the consequences of Lemma \ref{l:FGweak} and the properties of stochastic integral in Banach spaces. 
\begin{rem}\label{r:varcons}Fix $x^*\in D(A^*)$. Let $Y$ be a $E$-valued, strongly measurable adapted process with locally Bochner square integrable trajectories a.s. 
\begin{enumerate}
\item Condition \ref{F} and Lemma \ref{l:FGweak} implies that $E\ni x\mapsto \la F(x),x^*\ra \in \R{}$ is a Lipschitz-continuous function. Hence the condition (i) from the definition of weak solution to \eqref{SCP} is satisfied.
 
\item From \ref{G} and Lemma \ref{l:FGweak} it follows that $E\ni x\mapsto G^*(x)x^*\in H$ is a Lipschitz-continuous function. Hence the process  $G^*(Y)x^*$ is strongly measurable and adapted with locally square integrable trajectories a.s. In particular $G^*(Y)x^*$ is stochastically integrable on $[0,t]$ for all $t>0$.  

\item By \ref{F} and \ref{G} the mappings $E\ni x\mapsto T(s)F(x)\in E$, $E\ni x\mapsto T(s)G(x)\in\Ca{L}(H,E)$   are continuous functions, hence processes $$T(t-\cdot)F(Y(\cdot)),\quad T(t-\cdot)G(Y(\cdot))$$ are  adapted, strongly and $H$-strongly measurable, respectively. Moreover, the first process  has trajectories locally Bochner  integrable a.s. 
\end{enumerate} 
\end{rem}
\begin{proof}[Proof of Theorem \ref{t:varcons}]
We apply the stochastic Fubini theorem from \cite{vanNeervenVeraar_Fub} to obtain the key equations for the proof of Theorem \ref{t:varcons}: equations \eqref{r:stochFub1}, \eqref{r:stochFub2} and \eqref{r1} below. As the process $Y$ is assumed to be strongly measurable and adapted we may assume without loss of generality that $E$ is separable. Moreover, observe that since every adapted and measurable process with values in Polish
 space has a progressive version, we may assume that $Y$ is progressive. \par

\textbf{Step 1.}
Fix $x^*\in D(A^*)$ and $t>0$.  Consider the processes :
\begin{align*}
  \Psi^*_{1}x^*(s,u,\omega)=1_{[0,s]}(u)G^*(Y(u,\omega))T^*(t-s)x^*,\\
 \Psi^*_{2}x^*(s,u,\omega)=1_{[0,s]}(u)G^*(Y(u,\omega))T^*(s-u)x^*,
\end{align*}
which are formed from $\Ca{L}(H,E)$-valued  possesses  $\Psi_{1}$, $\Psi_{2}$ given by
$$\Psi_{1}=1_{[0,s]}(u)T(t-s)G(Y(u,\omega)), \Psi_{2}=1_{[0,s]}(u)T(s-u)G(Y(u,\omega)),$$
where $s,u\in[0,t]$.
By Remark \ref{r:varcons}.(iii) it follows that  $\Psi_{1}$, $\Psi_{2}$ are  $H$-strongly measurable. As $Y$ is assumed to be progressive we conclude that for all $s\in[0,t]$ and $h\in H$ the selections: $(\Psi_{1})_sh(u,\omega):=\Psi_{1}(s,u,\omega)h$, $(\Psi_{2})_sh:=\Psi_{2}(s,u,\omega)h$ are progressive. Hence by Proposition 2.2 in \cite{vanNeervenVeraarWeis} we obtain strong measurability of  $\Psi^*_{1}x^*$, $\Psi^*_{2}x^*$ and for all $s\in[0,t] $ progressive measurability of $\Psi^*_{1}x^*(s)$, $\Psi^*_{2}x^*(s)$. To apply the stochastic Fubini theorem (Theorem 3.5 in \cite{vanNeervenVeraar_Fub}) to $\Psi^*_{1}x^*$, $\Psi^*_{2}x^*$ it is sufficient to show that
$$\int_0^t\left(\int_0^t\n\Psi^*_{1}x^*\n_H^2 du\right)^\inv{2}ds,\int_0^t\left(\int_0^t\n\Psi^*_{2}x^*\n_H^2 du\right)^\inv{2}ds<\infty \quad \textrm{a.s.}$$
Indeed, using  Lemma \ref{l:FGweak}  we have the following estimate
\begin{align}\label{2}
\int_0^t\lee\int_0^t\n\Psi^*_2x^*\n_H^2 du\p^\inv{2}&ds=\int_0^t\lee\int_0^{s}\n G^*(Y(u))T^*(s-u)x^*\n_H^2 du\p^\inv{2}ds\\
&\leq \int_0^t\lee\int_0^{s} C^2(s-u)(1+\n Y(u)\n_E)^2 du\p^\inv{2}ds \quad \textrm{a.s.,} \notag
\end{align}
where $C(s-u)=\n y^*_{s-u}\n_{E^*} C_2$ is the constant occurring in inequality  \eqref{Fweak3} for $y^*_{s-u}\in \tilde{E}^*$ such that $ (\lambda I -A^*)^{-1}y^*_{s-u}=T^*(s-u)x^*$.  Since $T^*(s-u)x^*\in D(A^*)$ and $A^*T^*(s-u)x^*=T^*(s-u)A^*x^*$ (see Proposition 1.2.1 in \cite{vanNeervenadjoint}), we have
\begin{align}\label{3}
C(s-u)&= C_2\n T^*(s-u)(\lambda I -A^*)x^*\n_{E^*}\\&\leq C_2 M(t)\n(\lambda I -A^*)x^*\n_{E^*}, \notag 
\end{align}
where $M(t)=\sup_{s\in[0,t]}\n T(s)\n_{\Ca{L}(E^*)}$.
Hence combining \eqref{3} and \eqref{2} we obtain, almost surely,
$$\int_0^t\lee\int_0^t\n\Psi^*_2x^*\n_H^2 du\p^\inv{2}ds\leq tC_2 M(t)\n(\lambda I -A^*)x^*\n_{\tilde{E}^*}(\sqrt{t}+\n Y\n_{L^2(0,t;E)}).$$
In the similar way we get 
$$\int_0^t\lee\int_0^t\n\Psi^*_1x^*\n_H^2 du\p ^\inv{2}ds\leq tC_2 M(t)\n(\lambda I -A^*)x^*\n_{E^*}(\sqrt{t}+\n Y\n_{L^2(0,t;E)}).$$
Thus from stochastic Fubini's theorem it follows that
\begin{align}\label{r:stochFub1}
&\int_0^t \int_0^{s} G^*(Y(u))T^*(t-s)x^* dW_H(u)ds =\int_0^{t} \int_0^t\Psi^*_1x^*dsdW_H(u),\\  
&\int_0^t \int_0^{s} G^*(Y(u))T^*(s-u)x^* dW_H(u)ds =\int_0^{t} \int_0^t \Psi^*_2x^*dsdW_H(u).   \label{r:stochFub2}
\end{align}
Moreover, notice that for all $u\in[0,t]$ we obtain
\begin{align} 
\label{r:intPsi1}
\int_0^t\Psi^*_1x^*ds=\int_0^t\Psi^*_2x^*ds=\int_0^{t-u}G^*(Y(u,\omega))T^*(s)x^*ds \quad \textrm{a.s.}
\end{align} 
By \eqref{r:intPsi1} and strong continuity of $(T^\odot(t))_{t\geq 0}$ it follows that for all $x^*\in D(A^\odot)$ we have, almost surely, 
\begin{align}\notag 
\int_0^{t}\int_0^t \Psi^*_1A^\odot x^*dsdW_H(u)&= \int_0^{t}\int_0^t \Psi^*_2A^\odot x^*dsdW_H(u)
\\&=\int_{0}^{t} \int_{0}^{t-u}G^*(Y(u))T^\odot(s)A^\odot x^*dsdW_H(u)\label{r1} \\ 
&=\int_{0}^{t}  G^*(Y(u))\int_{0}^{t-u} T^\odot (s)A^\odot x^*ds dW_H(u)\notag\\ 
&=\int_{0}^{t} G^*(Y(u))\lee T^\odot (t-u) x^*-x^*\p dW_H(u).\notag 
\end{align}

\textbf{Step 2.} Let us suppose that $Y$ is a weak solution to \eqref{SCP}, we prove that $u\mapsto T(t-u)G(Y(u))$
is in $L^0(\Omega;\gamma(L^2(0,t;H),E))$ and
 \eqref{voc2} holds. From \eqref{r1} and \eqref{r:stochFub1} and by the definition of a weak solution, we conclude that for all $x^*\in D(A^\odot)$ and $t>0$ one has, almost surely,
\begin{align}\notag
& \la Y(t)-Y_0,x^*\ra- \int_0^{t}\la Y(s), A^\odot x^* \ra ds -\int_0^{t}\la F(Y(s)),x^*\ra ds\\&=\int_0^{t}G^*(Y(u))x^*dW(u) \notag  \\&\stackrel{\eqref{r1}}{=}\int_{0}^{t} G^*(Y(u))T^\odot(t-u)x^*dW_H(u)- \int_{0}^{t}\int_{0}^{t} \Psi^*_1A^\odot x^*dsdW_H(u) \notag \\
&\stackrel{\eqref{r:stochFub1}}{=} \int_{0}^{t} G^*(Y(u))T^\odot(t-u)x^*dW_H(u) \label{r3}\\&\hspace{4cm}- \int_{0}^{t}\int_{0}^{s} G^*(Y(u))T^\odot(t-s)A^\odot x^*dW_H(u)ds. \notag 
\end{align}
Assuming that $y^*=A^\odot x^*\in D(A^\odot)$  and  using the definition of a weak solution again, we can write the last term in \eqref{r3} as follows
\begin{align}
 \notag \int_{0}^{t}&\int_{0}^{s} G^*(Y(u))T^\odot(t-s)A^\odot x^*dW_H(u)ds=\int_0^t\Big[\la Y(s)-Y_0, T^\odot(t-s)y^*\ra \\
&- \int_0^s\la F(Y(u)), T^\odot(t-s)y^*\ra du- \int_0^s\la Y(u),A^\odot T^\odot(t-s)y^*\ra du\Big]ds \label{r4} \\
&=-\int_0^t\la Y_0,T^\odot(t-s)y^*\ra ds+\int_0^t\la Y(s),y^*\ra ds\notag\\&-\int_0^t\int_u^t\la F(Y(u)),T^\odot(t-s)y^*\ra dsdu \text{ a.s.,} \notag 
\end{align}
where the first equality follows from condition (iii) of the definition of a weak solution to \eqref{SCP} and in the last equality we use strong continuity of  $(T^\odot(t))_{t\geq 0}$ and Fubini's theorem.
Applying \eqref{r4} into \eqref{r3} we obtain, almost surely,
\begin{align*}
\la Y(t),x^*\ra&-\la Y_0,x^*\ra-\la \int_0^t Y(s)ds,A^\odot x^* \ra -\int_0^t\la F(Y(s)),  x^*\ra ds\\
&=\int_{0}^{t} G^*(Y(u))T^\odot(t-u)x^*dW_H(u)+\int_0^t\la Y_0, T^\odot(t-s)A^\odot x^*\ra ds
\\
&\quad-\int_0^t\la Y(s),  A^\odot x^*\ra ds+\int_0^t\int_u^t\la F(Y(u)), T^\odot(t-s)A^\odot x^*\ra dsdu\\
&=\int_{0}^{t} G^*(Y(u))T^\odot (t-u)x^*dW_H(u)+\la Y_0, T^\odot(t)x^*\ra-\la Y_0, x^*\ra
\\
&\quad-\int_0^t\la Y(s), A^\odot x^*\ra ds+\int_0^t \la F(Y(u)), T^\odot(t-u)x^*\ra du\\
&\quad-\int_0^t \la F(Y(u)), x^* \ra du, 
\end{align*}
By Remark \ref{r:varcons}.(iii) the process  $T(t-\cdot)F(Y(\cdot))$ has, almost surely, trajectories Bochner integrable on $[0,t]$.
 Hence for all $x^*\in D((A^\odot)^2)$ one has, almost surely,
\begin{align}
\notag\la Y(t), x^*\ra &=\la T(t)Y_0, x^*\ra + \la 
\int_0^{t}T(t-u)F(Y(u))du, x^*\ra\\&+ \int_{0}^{t}G^*(Y(u))T^\odot(t-u)x^*dW_H(u). \label{r5} 
\end{align}
Using the hypothesis \ref{h0}, \ref{F} and \ref{G} by the Krein-Smulyan theorem the above equality is also valid for all $x^*\in E^*$ (see \cite[Corollary 6.6]{Kunze} and the proof of Lemma 2.7 in \cite{vanNeervenVeraarWeis}).
 
Now we assume that hypothesis (i) holds and we prove that for all $t>0$ the process
 $u\mapsto \Psi(u):=T(t-u)G(Y(u))$ is stochastically integrable on $[0,t]$ with respect to $W_H$.  
From the Step 1  of the proof it follows that the process $\Psi$ is scalarly in $L^q(\Omega;L^2(0,t;H))$. We define the random variable $\xi$ by $\xi:=Y(t)-T(t)Y_0-\int_0^{t}T(t-u)F(Y(u))du$ and claim that $\xi\in L^q(\Omega;E)$. Indeed,  by the assumptions \ref{h0}, \ref{F} and (i) we obtain 
\begin{align*}
\left(\E \n \xi \n_E^q\right)^\inv{q} &\leq \left(\E \n Y(t)\n^q_E\right)^\inv{q}+\n T(t)\n_{\Ca{L}(E)}\left(\E \n Y_0\n^q_E\right)^\inv{q}\\&\quad\quad+\left(\E\left( \int_0^{t}a(t-u)(1+\n Y(u)\n^2_E)du\right)^q\right)^\inv{q}
\\&\leq \left(\E \n Y(t)\n^q_E\right)^\inv{q}+\n T(t)\n_{\Ca{L}(E)}\left(\E \n Y_0\n^q_E\right)^\inv{q}\\&\quad+\n a\n_{L^1(0,t)}\sup_{u\in[0,t]}\left(\E \ (1+\n Y(u)\n^2_E)^q\right)^\inv{q}<\infty, 
\end{align*}
where in the last inequality we use the Minkowski integral inequality. From \eqref{r5} it follows that $\xi$ satisfies \eqref{xi} for all $x^*\in E^*$.
Hence,  by Theorem 3.6 and Remark 3.8 in \cite{vanNeervenVeraarWeis},
 the process $\Psi$ is stochastically integrable on $[0,t]$ with respect to $W_H$ and condition (ii) holds. 

Notice that by Lemma \ref{l:*weakdense} the set $D((A^\odot)^2)$ separates the points of $\tilde{E}$, thus also in $E$,  and $E$ is assumed to be separable, hence by the Hahn-Banach theorem there exists a sequence $(x^*_n)_{n\geq 1}$ of elements from $D((A^\odot)^2)$ which separates the points of $E$. Thus \eqref{r5} holds simultaneously for all $x_n^*$ on set of measure one. Therefore \eqref{voc2} holds.\par 
On the other hand assume that $Y$ is a  mild solution to \eqref{SCP}. By Remark \ref{r:varcons}.(ii) it follows that for all $x^*\in D(A^*)$ the process  $u\mapsto G^*(Y(u))x^*$ is stochastically integrable. Moreover, by  \eqref{voc2} and then by Fubini's theorem and \eqref{r1}, and once more by \eqref{voc2} we obtain, almost surely,
\begin{align*}
&\la\int_0^tY(s)ds,  A^\odot x^*\ra
\stackrel{\eqref{voc2}}{=}\la \int_0^tT(s)Y_0ds, A^\odot x^*\ra\\
&\hspace{2cm}\quad+\la \int_0^t\int_0^s T(s-u)F(Y(u))duds, A^\odot x^*\ra\\
&\hspace{2cm}\quad+ \int_0^t\int_0^s G^*(Y(u))T^\odot(s-u)A^\odot x^*dW_H(u)ds \\ 
&\stackrel{\eqref{r1}}{=}\la T(s)Y_0, x^*\ra-\la Y_0, x^*\ra+\la \int_0^t(T(t-u)F(Y(u))du, x^*\ra\\&\hspace{1.5cm}-\int_0^t \la F(Y(u)), x^*\ra du \\
&\hspace{1.5cm}+\la \int_0^tT(t-u)G(Y(u))dW_H(u),x^*\ra-\int_0^tG^*(Y(u))x^*dW_H(u)\\
&\stackrel{\eqref{voc2}}{=}\la Y(t)-Y(0),x^*\ra-\int_0^t \la F(Y(u)),x^*\ra du-\int_0^tG^*(Y(u))x^*dW_H(u). 
\end{align*}
\end{proof}
In  a separable Banach space $E$ let us consider the version of \eqref{SCP}, where the noise is introduced additively i.e. $G\in\Ca{L}(H,\tilde{E})$. We will denote it by (SCPa).
Here we do not need the assumption that $E$ has \textsc{umd}$^-$ property, since stochastic Wiener integral in every Banach space is characterised by $\gamma$-norms (see \cite[Theorem 4.2]{vanNeervenVeraarWeis2}).  By Theorem \ref{t:varcons} we obtain the following corollary. 
\begin{cor}\label{c:equivSol}
Assume that condition \ref{F} is satisfied,  $T(s)G\in\Ca{L}(H,E)$ for all $s>0$ and $Y$ is an $E$-valued $H$-strongly measurable adapted process with locally Bochner square integrable trajectories a.s. If for some $t_0> 0$ one of the following conditions holds 
\begin{enumerate}
\item  $\sup_{s\in[0,t_0]} \E \n Y(s)\n^q_E<\infty$ for some $q>1$;

\item the function $u\mapsto T(u)G$ represents an operator in $\gamma(L^2(0,t_0;H),E)$,
\end{enumerate} 
then, 
$Y$ is a weak solution to (SCPa) if and only if $Y$ is a mild solution to (SCPa).\\ 
Moreover, if there exists a solution of (SCPa) and the hypothesis (i) holds, then $u\mapsto T(t-u)G$ represents an element in $\gamma(L^2(0,t;H),E)$ for all $t>0$.
\end{cor}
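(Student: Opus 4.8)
The plan is to deduce the corollary from Theorem~\ref{t:varcons}, since (SCPa) is exactly \eqref{SCP} with the constant nonlinearity $G(x)\equiv G\in\Ca{L}(H,\tilde E)$. With $G$ constant the hypothesis \ref{G} collapses to the pointwise requirement $T(s)G\in\Ca{L}(H,E)$ (the Lipschitz estimate is vacuous), which is assumed, and the operator $G^*\in\Ca{L}(\tilde E^*,H)$ is automatically bounded; consequently the conclusion of Lemma~\ref{l:FGweak} for $G^*$ holds with the constant $C_2(x^*)=\n G^*\n_{\Ca{L}(\tilde E^*,H)}\n x^*\n_{\tilde E^*}$ and no $L^2_{loc}$-majorant is needed. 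Because of this, Step~1 of the proof of Theorem~\ref{t:varcons} goes through verbatim: in the Fubini estimates \eqref{2} and \eqref{3} one simply replaces $G^*(Y(u))$ by the bounded constant $G^*$, so the stochastic Fubini theorem still delivers \eqref{r:stochFub1}, \eqref{r:stochFub2} and \eqref{r1}. None of this used the \textsc{umd}$^-$ property.

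The one and only place where Theorem~\ref{t:varcons} invokes \textsc{umd}$^-$ is the passage from the scalar identity \eqref{r5} to the stochastic integrability of $u\mapsto T(t-u)G(Y(u))$, i.e.\ to condition~(ii) of Definition~\ref{d:mildsol}. In the additive setting this integrand is the \emph{deterministic} map $u\mapsto T(t-u)G$, and for Wiener integrands stochastic integrability is characterised in an arbitrary separable Banach space by the requirement that $u\mapsto T(t-u)G$ represent an operator in $\gamma(L^2(0,t;H),E)$ (see \cite[Theorem~4.2]{vanNeervenVeraarWeis2}). This is precisely what allows one to discard the geometric assumption on $E$. Granting the relevant $\gamma$-membership at a given $t$, both implications then follow exactly as in Theorem~\ref{t:varcons}: for \emph{weak} $\Rightarrow$ \emph{mild} one combines \eqref{r1} and \eqref{r:stochFub1} with Definition~\ref{d:weaksolSCP} to obtain \eqref{r5} for $x^*\in D((A^\odot)^2)$, extends it to all $x^*\in E^*$ via Krein--Smulyan, and separates the points of the separable space $E$ by a countable subset of $D((A^\odot)^2)$ (Lemma~\ref{l:*weakdense}) to reach \eqref{voc2}; the converse is the purely algebraic computation closing the proof of Theorem~\ref{t:varcons}, which uses only Fubini, \eqref{r1} and the definitions.

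It remains to pass from the single time $t_0$ in the hypotheses to all $t>0$. Under~(ii) I would propagate the $\gamma$-membership as follows: left-multiplying the representing operator by $T(s)$, which is bounded on $E$ for every $s>0$ by \ref{h0}, shows that $w\mapsto T(w)G$ represents a $\gamma$-operator on each shifted interval $(s,s+t_0)$, since $T(s)$ composed with the representing operator is represented by $u\mapsto T(s+u)G$; partitioning $(0,t)$ into finitely many subintervals each contained in such a shifted copy and using the restriction and finite direct-sum properties of $\gamma(L^2;E)$-integrands then yields $u\mapsto T(t-u)G\in\gamma(L^2(0,t;H),E)$ for every $t>0$, whence integrability of the noise at all times. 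Under~(i) I would first recover the $\gamma$-membership at $t_0$ by the argument of Step~2: setting $\xi:=Y(t_0)-T(t_0)Y_0-\int_0^{t_0}T(t_0-u)F(Y(u))\,du$, the bound $\xi\in L^q(\Omega;E)$ follows from \ref{F} and~(i) as there, \eqref{r5} shows $\xi$ represents the Wiener integral, and the $\gamma$-characterisation gives the membership at $t_0$; the same propagation then carries it to all $t>0$ and simultaneously establishes the ``moreover'' assertion.

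The main --- indeed the only non-routine --- obstacle is the justification flagged in the second paragraph: one must verify that the sole dependence of Theorem~\ref{t:varcons} on \textsc{umd}$^-$ is the integrability of the noise integrand, and that in the additive case this is superseded by the Banach-space Wiener-integral theory, valid without any geometric hypothesis on $E$. The second genuinely new point is the $t_0\to t$ propagation, which is carried by the semigroup structure through the boundedness of $T(s)$ on $E$ for $s>0$; everything else is bookkeeping inherited from Theorem~\ref{t:varcons}.
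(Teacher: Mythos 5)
Your proposal is correct and follows essentially the same route as the paper, which deduces the corollary from Theorem~\ref{t:varcons} by observing that in the additive case the \textsc{umd}$^-$ (and \textsc{umd}/Brownian-filtration) hypotheses are only needed for the stochastic integrability of the noise integrand, and that for the deterministic integrand $u\mapsto T(t-u)G$ this is characterised by $\gamma$-norms in an arbitrary separable Banach space via \cite[Theorem 4.2]{vanNeervenVeraarWeis2}. The only difference is that for the passage from $t_0$ to all $t>0$ the paper simply invokes \cite[Theorem 7.1]{vanNeervenVeraarWeis2} (Remark~\ref{r:gammaTG}), whereas you reprove that extension by the shift-and-partition argument using the boundedness of $T(s)$ on $E$ and the ideal and direct-sum properties of $\gamma$-radonifying operators -- a valid, self-contained substitute.
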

\begin{rem}\label{r:gammaTG}
Notice that by Theorem 7.1 in~\cite{vanNeervenVeraarWeis2} it follows that if there exists $t_0> 0$ such that 
$u\mapsto T(u)G$ represents an element in $\gamma(L^2(0,t_0;H),E)$, then  for all $t> 0$ the function $[0,t]\ni u\mapsto T(u)G$ also belongs to $\gamma(L^2(0,t;H),E)$ .
\end{rem}

\section{Equivalence of generalised strong, weak and mild solutions}
\label{s:threesol}
In \cite{coxgorajski} a generalised strong solution to \eqref{SCP} is defined and its equivalence to a mild solution of \eqref{SCP} is proven. Under weaker assumptions we establish in Theorem \ref{t:varcons2} the equivalence of mild, weak and generalised strong solutions. First extend hypothesis \ref{F}.
\renewcommand\theenumi {\Alph{enumi}}
\renewcommand\theenumii {\roman{enumii}}
\renewcommand\labelenumi{(H\theenumi') }
\labelformat{enumi}{(H#1')}
\begin{enumerate}
    \item \label{F'} Assume that $F:D(F)\subset E\to E$ satisfies \ref{F}  and for all $t>0$ and $g\in L^1(0,t;E)$ the function  $F(g)$ is Bochner integrable on $[0,t]$. 
\end{enumerate}
It is clear that if $F$ is a Lipschitz function, then \ref{F'} is satisfied.
\renewcommand\theenumi {\roman{enumi}}
\renewcommand\labelenumi{(\theenumi)}

\begin{thm}\label{t:varcons2}
Assume that the hypotheses of Theorem \ref{t:varcons} are satisfied and condition \ref{F'} holds. Let $Y$ be an $E$-valued $H$-strongly measurable adapted process with locally Bochner square integrable trajectories a.s. If for all $t> 0$ the processes:
\begin{align}\label{G2}
u\mapsto G(Y(u)),\quad u\mapsto \int_0^{t-u}T(s)G(Y(u,\omega))ds
\end{align}
 are in $\gamma(L^2(0,t;H),E)$ a.s.,
then the following conditions are equivalent:
\begin{enumerate}
	\item $Y$ is a generalised strong solution of \eqref{SCP}.
    \item $Y$ is a weak solution of \eqref{SCP}.
	 	 \item $Y$ is a mild solution of \eqref{SCP}.
\end{enumerate}  
\end{thm}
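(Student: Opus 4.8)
The plan is to establish a cycle of implications and let Theorem \ref{t:varcons} carry part of the work: since its hypotheses are in force, the equivalence (ii) $\Leftrightarrow$ (iii) is already available, so it suffices to prove (i) $\Rightarrow$ (ii) and (iii) $\Rightarrow$ (i). Throughout, the first process in \eqref{G2} guarantees that $u\mapsto G(Y(u))$ is stochastically integrable, so that $\int_0^t G(Y(s))\,dW_H(s)$ is a well-defined $E$-valued stochastic integral; this is the term that distinguishes the generalised strong formulation from the weak and mild ones.

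The implication (i) $\Rightarrow$ (ii) is the elementary direction. Assuming $Y$ is a generalised strong solution, I would fix $x^*\in D(A^\odot)\subset D(A^*)$; by Remark \ref{r:varcons}.(i) and (ii) the two integrability requirements in Definition \ref{d:weaksolSCP} hold. I would then pair the identity of Definition \ref{d:strongsolSCP} with $x^*$. The Bochner term gives $\la\int_0^t F(Y(s))\,ds,x^*\ra=\int_0^t\la F(Y(s)),x^*\ra\,ds$; the stochastic term gives $\la\int_0^t G(Y(s))\,dW_H(s),x^*\ra=\int_0^t G^*(Y(s))x^*\,dW_H(s)$ by the defining property of the stochastic integral; and for the $A$-term I would use $\int_0^t Y(s)\,ds\in D(A)$ together with $A^\odot x^*=A^*x^*$ to write $\la A\int_0^t Y(s)\,ds,x^*\ra=\la\int_0^t Y(s)\,ds,A^\odot x^*\ra=\int_0^t\la Y(s),A^\odot x^*\ra\,ds$. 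This is precisely the weak-solution identity.

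The substantive direction is (iii) $\Rightarrow$ (i). Starting from the mild formula \eqref{voc2}, I would integrate both sides over $[0,t]$ and interchange the order of integration in the two convolution terms: by the classical Fubini theorem for the drift term and by the stochastic Fubini theorem of \cite{vanNeervenVeraar_Fub} for the noise term, the latter being licensed exactly because the second process in \eqref{G2} lies in $\gamma(L^2(0,t;H),E)$ a.s. This yields
\begin{align*}
\int_0^t Y(s)\,ds &= \int_0^t T(s)Y_0\,ds + \int_0^t\Big(\int_0^{t-r}T(\sigma)F(Y(r))\,d\sigma\Big)\,dr \\
&\quad + \int_0^t\Big(\int_0^{t-r}T(\sigma)G(Y(r))\,d\sigma\Big)\,dW_H(r).
\end{align*}
Each term lies in $D(A)$: for the first I would invoke the semigroup identity $A\int_0^\tau T(\sigma)x\,d\sigma=T(\tau)x-x$; for the second I would apply it pointwise in $r$ and pass $A$ through the Bochner integral using its closedness (condition \ref{F'} ensuring that $F(Y)$ is genuinely Bochner integrable); and for the third I would invoke the closedness of $A$ with respect to the stochastic integral. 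Collecting the terms gives
\begin{align*}
A\int_0^t Y(s)\,ds &= \big(T(t)Y_0-Y_0\big) + \int_0^t\big(T(t-r)F(Y(r))-F(Y(r))\big)\,dr \\
&\quad + \int_0^t\big(T(t-r)G(Y(r))-G(Y(r))\big)\,dW_H(r),
\end{align*}
and recognising the mild formula \eqref{voc2} inside the right-hand side collapses this to the generalised strong identity of Definition \ref{d:strongsolSCP}.

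The main obstacle is the third term in the last step: justifying that $A$ may be pulled inside the stochastic integral. This rests on a closedness property stating that if both the integrand and its image under $A$ are stochastically integrable, then the value of the integral lies in $D(A)$ and $A$ commutes with integration. Here the integrand $r\mapsto\int_0^{t-r}T(\sigma)G(Y(r))\,d\sigma$ is in $\gamma(L^2(0,t;H),E)$ a.s.\ by \eqref{G2}, while its image $r\mapsto T(t-r)G(Y(r))-G(Y(r))$ is the difference of two $\gamma$-processes, the first being stochastically integrable because $Y$ is a mild solution (condition (ii) of Definition \ref{d:mildsol}) and the second by \eqref{G2} again. Thus the two $\gamma$-radonifying hypotheses in \eqref{G2} are precisely what make the stochastic Fubini theorem and the closedness argument applicable, and this is where the delicate part of the proof concentrates.
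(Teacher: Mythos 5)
Your proposal is correct and follows essentially the same route as the paper: the equivalence (ii) $\Leftrightarrow$ (iii) is delegated to Theorem \ref{t:varcons}, (i) $\Rightarrow$ (ii) is the elementary pairing with $x^*\in D(A^\odot)$, and (iii) $\Rightarrow$ (i) is obtained by integrating the mild formula, interchanging via the (stochastic) Fubini theorem, and applying the closedness of $A$ under Bochner and stochastic integration — the latter being exactly Lemma 2.8 of \cite{coxgorajski}, whose hypotheses are supplied by the two $\gamma$-conditions in \eqref{G2} just as you describe.
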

In the case where $E$ is reflexive  Theorem \ref{t:varcons2} is a simple consequence of Theorem  \ref{t:varcons} (see Remark \ref{u:repr2} below).   
\begin{rem}
\label{u:repr2}
\begin{enumerate}
    \item It is obvious that if $Y$ is a generalised strong solution to \eqref{SCP}, then $Y$ is a weak solution of \eqref{SCP}.
    
\item Let $(T(t))_{t\geq 0}$ be $\gamma$-bounded. In Hilbert spaces uniformly bounded families are $\gamma$-bounded (for the definition of 
$\gamma$-boundness and more on the applications of this notion see \cite{Denk}). For all $t>0$ the family $$\Big\{\int_0^t 1_{[0,t-u]}(s)T(s)ds: u\in[0,t]\Big\}$$ is also $\gamma$-bounded as an integral mean of $\gamma$-bounded operators (see Theorem 9.7 in \cite{vanNeervenISEM}). Hence using the multiplier theorem due to Kalton and Weis \cite{Kalton} we obtain: if $G(Y(\cdot))$ is in $\gamma(L^2(0,t;H),E)$ a.s., then the processes $$T(t-\cdot)G(Y(\cdot)),\quad \int_0^{t-\cdot}T(s)G(Y(\cdot,\omega))ds$$ are in $\gamma(L^2(0,t;H),E)$ a.s. 

\item Let $\tilde{E}$ be a reflexive Banach space, and for all $t>0$ the process $ G(Y(\cdot))$ is in $\gamma(L^2(0,t;H),E)$ a.s. Assume that  and  \ref{F'}, \ref{G} are satisfied and $Y$ has almost all trajectories locally square integrable. Then, it is easy to prove that a weak solution to \eqref{SCP} is a generalised strong solution to \eqref{SCP}. Indeed, let $Y$ be a weak solution to \eqref{SCP}, then for every $t>0$ and all $x^*\in D(A^*)$ we have the equality
\begin{align}\label{Erefl}
\la\int_0^tY(s)ds, A^*x^* \ra =\la e(t), x^* \ra \quad \textrm{a.s.,}
\end{align}
where $e(t)= Y(s)-Y_0-\int_0^tF(Y(s))ds-\int_0^tG(Y(s))ds\in E\subset \tilde{E}= \tilde{E}^{**}$. By reflexivity of $\tilde{E}$ it follows that  $D(A^*)$ is dense in $\tilde{E}^*$, hence, almost surely, the right hand side of \eqref{Erefl} has an extension to bounded linear functional on $\tilde{E}^*$.  Thus by the definition of $A^*$ one has $\int_0^tY(s)ds\in D(A^{**})$ and $A^{**}\int_0^tY(s)ds=e(t)$ a.s. Finally, by reflexivity of $\tilde{E}$ we can replace in the last equality  $(A^{**},D(A^
{**}))$ by $(A,D(A))$ and the assertion follows (see B.10 in \cite{EngNag}).    
\end{enumerate}
\end{rem}

\begin{proof}[Proof of Theorem \ref{t:varcons2}]
By Theorem \ref{t:varcons} and Remark \ref{u:repr2}.(i)  it suffices to prove that every mild solution  to \eqref{SCP} is a generalised strong solution of \eqref{SCP}. 

Fix $t> 0$. Let $Y$ be a mild solution of \eqref{SCP} satisfying the assumptions of Theorem \ref{t:varcons2}. Observe that $[0,t]\times \Omega\ni (u,\omega)\mapsto\int_0^t\Psi_1(s,u,\omega)ds$, where $\Psi_1$ is a process defined in Step 1 of the proof of Theorem \ref{t:varcons}, satisfies the assumptions of  Lemma 2.8 in \cite{coxgorajski}, i.e. for all $h\in H$ process $\Phi_1(s)h\in D(A)$ a.s. and the processes
\begin{align*}
 u\mapsto\int_0^t\Psi_1(s,u,\omega)ds=&\int^{t-u}_0 T(s)G(Y(u))ds\in D(A), \\
u\mapsto A\int_0^t\Psi_1(s,u,\omega)ds=&T(t-u)G(Y(u))-G(Y(u))\textrm{ a.s.}
\end{align*}
represent elements in $\gamma(L^2(0,t;H),E)$ a.s. Hence from Lemma 2.8 in \cite{coxgorajski} it follows that
\begin{align}
\label{r:Asint} \int_0^t\int^{t-u}_0 T(s)G(Y(u))dsdW_H(u)&\in D(A),\\
A\int_0^t\int^{t-u}_0T(s)G(Y(u))dsdW_H(u)&=\int_0^t T(t-u)G(Y(u))dW_H(u)\notag\\&-\int_0^tG(Y(u))dW_H(u) \textrm{ a.s.} \notag 
\end{align}
Moreover, by \ref{F'} and the properties of strongly continuous semigroup we obtain, almost surely,
\begin{align}
\notag
&\int_0^t\int^{t-u}_0 T(s)F(Y(u))dsdu, \int_0^tT(s)Y_0ds\in D(A),\\
&A\int_0^t\int^{t-u}_0 T(s)F(Y(u))dsdu=\int_0^tT(t-u)F(Y(u))du-\int_0^tF(Y(u))du, \notag \\
&A\int_0^tT(s)Y_0ds=T(t)Y_0-Y_0. \label{r:Aint}
\end{align}
Therefore, by \eqref{voc2} and \eqref{r:Asint}-\eqref{r:Aint} we have
\begin{align*}
&\int_0^tY(s)ds\in D(A),\\
&A\int_0^tY(s)ds\stackrel{\eqref{voc2},\eqref{r:Asint}-\eqref{r:Aint}}{=}T(t)Y_0-Y_0+\int_0^tT(t-u)F(Y(u))du\\
&\quad-\int_0^tF(Y(u))du+\int_0^t\left[T(t-u)G(Y(u))-G(Y(u))\right]dW_H(u)\\
&\stackrel{\eqref{voc2}}{=} Y(t)-Y_0 -\int_0^tF(Y(u))du-\int_0^tG(Y(u))du \textrm{ a.s.}
\end{align*}
\end{proof}

\begin{cor}\label{c:equivSol2} Under the assumptions of Corollary \ref{c:equivSol} if, in addition, \ref{F'} holds and for some $t_0> 0$ the mapping:
\begin{align}\label{G1aa}
 u\mapsto \int_0^{t_0-u}T(s)Gds
\end{align}
 represents an element in $\gamma(L^2(0,t_0;H),E)$, then the notions of generalised, weak and mild solutions to (SCPa) are equivalent.
\end{cor}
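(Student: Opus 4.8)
The plan is to obtain Corollary \ref{c:equivSol2} as the additive-noise specialization of Theorem \ref{t:varcons2}, using Corollary \ref{c:equivSol} to bypass the \textsc{umd}$^-$ assumption. First I would record that, under the stated hypotheses, Corollary \ref{c:equivSol} already yields the equivalence of weak and mild solutions of (SCPa), so it remains only to compare these with the generalised strong solution. As in Remark \ref{u:repr2}.(i), a generalised strong solution is automatically a weak solution: one pairs the defining identity with $x^*\in D(A^\odot)$ and uses $\la A\int_0^t Y(s)\,ds,x^*\ra=\la\int_0^t Y(s)\,ds,A^\odot x^*\ra$. Hence the whole statement reduces to the cycle being closed by the single implication that every mild solution of (SCPa) is a generalised strong solution.

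For this last implication I would transcribe the proof of Theorem \ref{t:varcons2} with $G(Y(u))$ replaced by the constant operator $G$. Fixing $t>0$ and starting from the mild representation \eqref{voc2}, I would integrate in time and apply the stochastic Fubini theorem (Theorem 3.5 in \cite{vanNeervenVeraar_Fub}) to the process $u\mapsto\int_0^{t-u}T(s)G\,ds$, exactly as $\Psi_1$ is treated in Step 1 of the proof of Theorem \ref{t:varcons}; here the scalar integrability condition needed to invoke Fubini is immediate, since $G$ does not depend on $Y$ and $T^*(\cdot)x^*$ is bounded on $[0,t]$. The two $\gamma$-radonifying conditions \eqref{G2} of Theorem \ref{t:varcons2} now specialize to the membership of $u\mapsto G$ and of $u\mapsto\int_0^{t-u}T(s)G\,ds$ in $\gamma(L^2(0,t;H),E)$: the second is the additional hypothesis of the corollary, promoted from the single value $t_0$ to arbitrary $t>0$ exactly as in Remark \ref{r:gammaTG} (Theorem 7.1 in \cite{vanNeervenVeraarWeis2}), while $u\mapsto T(t-u)G\in\gamma(L^2(0,t;H),E)$ comes from Corollary \ref{c:equivSol} (the reflection $u\mapsto t-u$ being an isometry of $L^2(0,t;H)$). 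With these in hand, Lemma 2.8 of \cite{coxgorajski} applied to $\Phi(u):=\int_0^{t-u}T(s)G\,ds$, for which $A\Phi(u)=T(t-u)G-G$, gives
\[
\int_0^t\Phi(u)\,dW_H(u)\in D(A),\qquad A\int_0^t\Phi(u)\,dW_H(u)=\int_0^t T(t-u)G\,dW_H(u)-\int_0^t G\,dW_H(u).
\]
Combining this with the deterministic identities for $\int_0^t T(s)Y_0\,ds$ and for $\int_0^t\int_0^{t-u}T(s)F(Y(u))\,ds\,du$ (the analogues of \eqref{r:Asint}--\eqref{r:Aint}, the latter using \ref{F'}) and re-substituting \eqref{voc2} yields $\int_0^t Y(s)\,ds\in D(A)$ together with the generalised strong solution identity.

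The step I expect to carry the real weight is verifying the $\gamma$-radonifying membership of the \emph{constant} process $u\mapsto G$, equivalently that $A\Phi=u\mapsto T(t-u)G-G$ represents an element of $\gamma(L^2(0,t;H),E)$. This is precisely what makes the bare noise term $\int_0^t G\,dW_H$ in the generalised strong identity well defined, and it is the only place where one must go beyond what guarantees mere existence of the mild solution; it is also implicit in condition (iii) of Definition \ref{d:strongsolSCP}. Here I would exploit that $u\mapsto T(t-u)G$ and $u\mapsto\int_0^{t-u}T(s)G\,ds$ are already known to lie in $\gamma(L^2(0,t;H),E)$ and that the $\gamma$-norm characterization of the Wiener integral is valid in an \emph{arbitrary} Banach space (Theorem 4.2 in \cite{vanNeervenVeraarWeis2}), which lets me dispense with the \textsc{umd}$^-$ assumption just as in Corollary \ref{c:equivSol}. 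The remaining manipulations — the scalar Fubini computations and the semigroup identities — are routine and run parallel to those already carried out in Theorems \ref{t:varcons} and \ref{t:varcons2}.
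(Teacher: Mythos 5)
Your overall architecture matches the intended one: Corollary \ref{c:equivSol} disposes of weak $\Leftrightarrow$ mild, generalised strong $\Rightarrow$ weak is immediate as in Remark \ref{u:repr2}.(i), and mild $\Rightarrow$ generalised strong is to be obtained by running the proof of Theorem \ref{t:varcons2} with constant $G$, i.e.\ by applying Lemma 2.8 of \cite{coxgorajski} to $\Phi(u)=\int_0^{t-u}T(s)G\,ds$ with $A\Phi(u)=T(t-u)G-G$. The gap is in the step you yourself flag as carrying the real weight. Lemma 2.8 of \cite{coxgorajski} needs \emph{both} $\Phi$ and $A\Phi$ to represent elements of $\gamma(L^2(0,t;H),E)$ as hypotheses; it does not produce the membership of $A\Phi$ from that of $\Phi$. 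Since $u\mapsto T(t-u)G$ is already known to lie in $\gamma(L^2(0,t;H),E)$, what you still need is precisely that the constant function $u\mapsto G$ lies in $\gamma(L^2(0,t;H),E)$, equivalently $G\in\gamma(H,E)$, and this does \emph{not} follow from the memberships of $u\mapsto T(u)G$ and $u\mapsto\int_0^{t-u}T(s)G\,ds$. Take $E=\tilde E=H=L^2(0,1)$, $A$ the Dirichlet Laplacian with eigenvalues $-\lambda_n=-n^2\pi^2$, and $G=I$: then $\int_0^t\|T(u)\|_{\gamma(H,E)}^2\,du=\sum_n (1-e^{-2\lambda_n t})/(2\lambda_n)<\infty$ and $\sup_{u\in[0,t]}\|\int_0^{t-u}T(s)\,ds\|_{\gamma(H,E)}\le 2\|A^{-1}\|_{\gamma(H,E)}<\infty$, so both $\gamma$-hypotheses hold, yet $I\notin\gamma(H,H)$ and the constant process is not stochastically integrable --- the term $\int_0^t G\,dW_H$ in the generalised strong identity does not even exist as an $E$-valued random variable. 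So the plan to ``exploit'' the two known $\gamma$-memberships to obtain the third cannot work.

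The correct resolution is that the missing condition must be supplied as a hypothesis rather than derived: the honest specialization of \eqref{G2} in Theorem \ref{t:varcons2} to additive noise requires \emph{both} that the constant map $u\mapsto G$ and that $u\mapsto\int_0^{t-u}T(s)G\,ds$ lie in $\gamma(L^2(0,t;H),E)$, and the first of these (equivalently $G\in\gamma(H,E)$) is exactly what makes condition (iii) of Definition \ref{d:strongsolSCP} meaningful for a constant integrand. Once that is in place, the remainder of your argument --- the stochastic Fubini computations, the identities \eqref{r:Asint}--\eqref{r:Aint} with the $F$-terms handled by \ref{F'}, the extension from $t_0$ to arbitrary $t$, and the use of the $\gamma$-norm characterisation of the Wiener integral valid in any Banach space to dispense with the \textsc{umd}$^-$ assumption --- goes through as you describe. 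As written, however, the crucial membership $G\in\gamma(H,E)$ is asserted rather than proved, and it is false in general under the stated hypotheses.
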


\section{Existence, uniqueness and continuity of solutions to stochastic delay equations}\label{s:SDEaddtive}

In this section we apply the results from Sections 2, 3  to establish the existence of a  unique continuous solution to a stochastic delay evolution equation of the form:
\begin{equation}\label{SDEa}
\eqtwo{dX(t)=BX(t)dt+\phi(X(t),X_t)dt+\psi dW_H(t),}{t>0,}{X(0)=x,X_0=f,}{}    
\end{equation}
where $(B,D(B))$ generates a semigroup of linear operators $(S(t))_{t\geq 0}$ on a separable Banach space $\tilde{E}$, $X_t:\Omega\times[-1,0]\to E$ is a segment process defined as $X_t(\theta)=X(t+\theta)$, $\theta \in [-1,0]$, and $E$ is a separable Banach space such that the hypothesis \ref{h0} holds with $A:=B$. We will use the following assumptions on mappings $\phi$  and $\psi$:

\renewcommand\theenumi {\arabic{enumi}}
\renewcommand\theenumii {\alph{enumii}}
\renewcommand\labelenumi{(H$\phi$) }
\labelformat{enumi}{(H$\phi$)}
\begin{enumerate} 
    \item \label{hphi} $\phi:D(\phi)\subset \Ep{p}\to \tilde{E}$, where $\Ep{p}=E\times L^p(-1,0;E)$ and $p\geq 1$,  is densely defined mapping and there exists $a\in L^{p}_{loc}(0,\infty)$ such that for all $t>0$ and $\mathcal{X}, \mathcal{Y} \in D(\phi)$ we have $S(t)\phi(\mathcal{X})\in E$   
\begin{align*}
&\norm{S(t)\phi(\mathcal{X})}_E\leq a(t)(1+\|\mathcal{X}\|_{\Ep{p}}), \\
 &\norm{S(t)(\phi(\mathcal{X})-\phi(\mathcal{Y})}_E\leq a(t)\|\mathcal{X}-\mathcal{Y}\|_{\Ep{p}},
\end{align*} 
\end{enumerate}

\renewcommand\theenumi {\arabic{enumi}}
\renewcommand\theenumii {\alph{enumii}}
\renewcommand\labelenumi{(H$\psi$)}
\labelformat{enumi}{(H$\psi$)}

\begin{enumerate} 
    \item \label{hpsi}  $\psi\in\Ca{L}(H,\tilde{E})$ and the mapping  $u\mapsto S(u)\psi\in\Ca{L}(H,E)$ represents an element in $\gamma(L^2(0,t;H),E)$ for some $t>0$. 
\end{enumerate}
\renewcommand\theenumi {\roman{enumi}}
\renewcommand\theenumii {\alph{enumii}}
\renewcommand\labelenumi{(\theenumi) }

\begin{defin}
A strongly measurable adapted\footnote{For all $t\in[-1,0]$ we assume that $X(t)$  is a $\Ca{F}_0$-strongly measurable.} process $X:[-1,\infty)\times\Omega \rightarrow E$  is called a \emph{weak solution} to \eqref{SDEa} if $X$ belongs to $L_{loc}^p(0,\infty;E)$ a.s. and for
all $t> 0$ and $x^*\in D(B^\odot)$:
 \begin{enumerate}
\item $s\mapsto\langle \phi(X(s),X_s), x^*\rangle$ is integrable on $[0,t]$ a.s.; 
 \item $(s,\omega)\mapsto\psi^*(X(s),X_s)x^*$ is stochastically integrable on $[0,t]$;
\item  almost surely 
\begin{align}\notag 
\langle X(t), x^*\rangle -\langle x_0,x^* \rangle&=\int_0^t\langle X(s), B^\odot x^*\rangle ds \\  
&+\int_0^t\langle \phi(X(s),X_s), x^*\rangle ds
+W_H(t)\psi^*x^*
\label{weaksolSDE}
\end{align}
\item $X_0=f_0$.
\end{enumerate}
\end{defin}

In \cite{coxgorajski} and \cite{GorajskiSDE} the Markovian representation of stochastic delay evolution equations with state dependent noise (i.e. $\psi:=\psi(X(t),X_t)$) in type 2 \textsc{umd} Banach spaces is proven. Using the same arguments we obtain the following representation for \eqref{SDEa}.
\begin{thm}[\cite{GorajskiSDE}]
 \label{t:rep}
Let $p>1$. The following conditions hold.
\begin{enumerate}
\item If $X$ is a weak solution to \eqref{SDEa}, then the process $Y$ defined by $Y(t):=[\pi_1Y(t),\pi_2Y(t)]'=[X(t),X_t]'$ is a weak solution to a stochastic evolution equation in $\Ep{p}=E\times L^p(-1,0;E)$: 
\begin{equation}\label{SCPDa} 
\eqtwo{dY(t)=(AY+F(Y))dt+GdW_H(t),}{t>0,}{Y(0)=[x,f]',}{}    
\end{equation}
where prime is a transposition, $G=[\psi,0]'\in\mathcal{L}(H,\tilde{\Ep{p}})$,  $F:\Ep{p}\to \tilde{\Ep{p}}$, $F=[\phi,0]'$ and $\lee A=\vecfour{B}{\ 0}{0}{\frac{d}{d\theta}}, D(A) \p$ is the generator of the delay semigroup $T(t)=\vecfour{S(t)}{\ 0}{\Ca{S}_t}{T_l(t)}$ on $\tilde{\Ep{p}}=\tilde{E}\times L^p(-1,0;\tilde{E})$, where $\lee T_l(t)\p _{t\geq 0}$ is the left translation semigroup on $L^p(-1,0;\tilde{E})$ and $\Ca{S}_s\in\Ca{L}(\tilde{E},L^p(-1,0;\tilde{E}))$ is given by
 \begin{align}\label{S_t}
 (\Ca{S}_sx)(\theta)=\eqtwo{0}{\theta\in(-1,-s\maxsym -1)}{S(\theta+s)x}{\theta\in(-s\maxsym-1,0)}
\end{align}
for all $s\geq0$ and all $x\in \tilde{E}$ (cf. Theorem 3.25 in \cite{Batkai2005}).\label{repr1}
\item Assume that the hypotheses \ref{hphi} and \ref{h0} with $A:=B$ hold and   $S(s)\psi\in\Ca{L}(H,E)$ for all $s>0$. Let $Y$ be an $\Ep{p}$-valued $H$-strongly measurable adapted process with locally Bochner square integrable trajectories a.s. such that for some $t_0>0$ one of the following conditions holds
\begin{enumerate}
\item  $\sup_{s\in[0,t_0]} \E \n Y(s)\n^q_{\Ep{p}}<\infty$ for some  $q>1$ . 

\item the function $u\mapsto T(u)G$ belongs to $\gamma(L^2(0,t_0;H),\Ep{p})$.
\end{enumerate}
Then, if $Y$
 is a weak solution to \eqref{SCPDa}, then the process defined by $X|_{[-1,0)}=f_0$, $X(t):=\pi_1Y(t)$ for $t\geq 0$ is a weak solution to \eqref{SDEa}. \label{repr2}
\end{enumerate}

\end{thm}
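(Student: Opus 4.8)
The plan is to prove the two implications by different routes, both resting on the block form $T(t)=\vecfour{S(t)}{0}{\mathcal{S}_t}{T_l(t)}$ of part~(\ref{repr1}) and on the mild--weak equivalence of Section~\ref{s:weakmild}. Since~(\ref{repr1}) carries no integrability hypothesis beyond $p>1$, I would establish it by a direct comparison of weak formulations. First I would identify the sun-dual generator of the delay semigroup on $\Ep{p}^*=E^*\times L^{p'}(-1,0;E^*)$ with $p'=\tfrac{p}{p-1}$: a computation as in \cite{Batkai2005} gives that $D(A^\odot)$ consists of pairs $(x_1^*,x_2^*)$ with $x_1^*\in D(B^\odot)$, $x_2^*\in W^{1,p'}(-1,0;E^*)$ and $x_2^*(-1)=0$, on which $A^\odot(x_1^*,x_2^*)=\bigl(B^\odot x_1^*+x_2^*(0),\,-(x_2^*)'\bigr)$.

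Substituting $Y(s)=[X(s),X_s]'$ into the weak equation for \eqref{SCPDa} and using $F=[\phi,0]'$, $G=[\psi,0]'$, the terms paired with $x_1^*$ reproduce exactly \eqref{weaksolSDE} (which holds because $x_1^*\in D(B^\odot)$), while the terms carrying $x_2^*$ must be shown to cancel. Writing the $L^{p'}$-contribution of $\langle Y(s),A^\odot(x_1^*,x_2^*)\rangle$ as $\langle X(s),x_2^*(0)\rangle-\int_{-1}^0\langle X(s+\theta),(x_2^*)'(\theta)\rangle\,d\theta$, integrating by parts in $\theta$, applying Fubini, and invoking the fundamental theorem of calculus together with the segment identity $X_s(\theta)=X(s+\theta)$ and $X_0=f$, the boundary term at $\theta=0$ cancels $\int_0^t\langle X(s),x_2^*(0)\rangle\,ds$, the boundary term at $\theta=-1$ vanishes since $x_2^*(-1)=0$, and the remaining integral reconstructs $\int_{-1}^0\langle X(t+\theta)-f(\theta),x_2^*(\theta)\rangle\,d\theta$, i.e.\ the $x_2^*$-part of $\langle Y(t)-Y(0),(x_1^*,x_2^*)\rangle$. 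This yields the weak equation for \eqref{SCPDa}, proving~(\ref{repr1}).

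For~(\ref{repr2}) I would pass through mild solutions. Under hypothesis~(i) or~(ii), Theorem~\ref{t:varcons} applies on the \textsc{umd}$^-$ space $\Ep{p}$, so the weak solution $Y$ is a mild solution of \eqref{SCPDa}; reading off the first coordinate of $Y(t)=T(t)Y(0)+\int_0^tT(t-u)F(Y(u))\,du+\int_0^tT(t-u)G\,dW_H(u)$ through the block structure of $T$ gives
\[
\pi_1Y(t)=S(t)x+\int_0^tS(t-u)\phi(Y(u))\,du+\int_0^tS(t-u)\psi\,dW_H(u),
\]
the mild form of \eqref{SDEa} for $X:=\pi_1Y$. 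The decisive step is to verify that the second coordinate is the history segment of the first, $\pi_2Y(t)(\theta)=X(t+\theta)=X_t(\theta)$ for a.e.\ $\theta\in[-1,0]$. Since $F$ and $G$ have vanishing second component, only $\mathcal{S}_{t-u}$ and $T_l(t-u)$ contribute to $\pi_2Y(t)$; using the explicit form \eqref{S_t} of $\mathcal{S}_s$, distinguishing $t+\theta\ge0$ from $t+\theta<0$ and changing variables, the block $\mathcal{S}_{t-u}$ rebuilds $S(t+\theta-u)$ so that $\pi_2Y(t)(\theta)$ coincides with the mild formula for $X$ at time $t+\theta$ when $t+\theta\ge0$, while for $t+\theta<0$ the translation $T_l$ returns the prescribed history $f_0$.

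I expect this identification to be the main obstacle, as it requires disentangling the interaction of $\mathcal{S}_{t-u}$ and $T_l(t-u)$ inside the stochastic convolution and justifying the pointwise-in-$\theta$ evaluation of a $\gamma$-radonifying stochastic integral. Once $\pi_2Y=X_{\cdot}$ is established, $X=\pi_1Y$ is a mild solution of \eqref{SDEa} on $E$ with semigroup $(S(t))_{t\ge0}$, additive noise $\psi$ (for which $u\mapsto S(u)\psi$ is $\gamma$-radonifying by \ref{hpsi}) and the now-identified adapted drift $\phi(X(\cdot),X_{\cdot})$, which is a.s.\ Bochner integrable by \ref{hphi}. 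Applying the mild$\,\Rightarrow\,$weak implication of Theorem~\ref{t:varcons} (equivalently Corollary~\ref{c:equivSol}) on $E$ --- by the same stochastic-Fubini argument as in Step~2 of its proof, now with the sun-dual $B^\odot$ --- then yields \eqref{weaksolSDE}, so $X$ is a weak solution of \eqref{SDEa}. This completes~(\ref{repr2}).
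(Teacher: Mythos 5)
Your proposal is correct and follows essentially the same route as the paper: part (i) by direct verification of the weak formulation against the sun-dual delay generator, and part (ii) by passing to the mild solution of \eqref{SCPDa} via Corollary \ref{c:equivSol} and reading off the two coordinates of the variation-of-constants formula. The paper itself merely defers these computations to Theorem 3.9 of \cite{GorajskiSDE} (plus the verification of \ref{F} for $F=[\phi,0]'$ via Lemma 3.1 there), so your sketch supplies the details the paper leaves to that reference.
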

 
\begin{proof}
The part (i) may be proved in much the same way as the corresponding part of Theorem 3.9 in \cite{GorajskiSDE}.

For the proof of the second part it is enough to  show that weak and mild solutions to \eqref{SCPDa} are equivalent (see the proof of Theorem 3.9 in \cite{GorajskiSDE}). Using the Lemma 3.1 in \cite{GorajskiSDE} one can prove that  the condition \ref{F} holds, hence we can apply the Corollary \ref{c:equivSol} to obtain the desired equivalence of solutions.
\end{proof}   

The  delay equations play a crucial role in modelling phenomena e.g. in bioscience (cf.\cite{Baker}, \cite{Erneux}) economics and finance (\cite{Mao}, \cite{Mohammed}).
 Here we consider delay evolution equation with the Wiener additive noise, for delay equation in \textsc{umd} type 2 Banach space with more general Wiener noises we refer to \cite{coxgorajski},\cite{GorajskiSDE}, where the reader can also find a more extensive literature overview. For stochastic delay evolution equation with infinite delay see \cite{Crewe}.
 At the end of this section we give two examples of stochastic delay partial differential equation in non-reflexive Banach space. First, in $C_0([0,1])$ we examine a simple stochastic delay advection-reaction equation. This equation can be used to model product goodwill (see \cite{BarucciGozzi1999} for deterministic goodwill model without delays). In the second example in the state space $L^1(0,\infty)$ we analyse a stochastic delay age-dependent equation of the Sharpe--Lotka-–McKendrick (or von Foerster) type (see \cite{Webb1985} and \cite{Qi-Min}).    
 
 In Section \ref{s:4.1} we recall the existence and continuity results for stochastic evolution equations with additive noise in separable Banach spaces. 
 
\subsection{Stochastic evolution equation with additive noise} \label{s:4.1}
Notice that Corollary \ref{c:equivSol} yields the following result concerning the existence and uniqueness of weak solution to (SCPa) i.e. \eqref{SCP} where $G\in\Ca{L}(H,\tilde{E})$ (see   \cite{DaPratoZabczyk}, \cite{PeszZab} and \cite{BrzezniakNeerven_stochconv}, \cite{vanNeervenVeraarWeis2} for the linear case.)
\begin{thm}\label{t:existence}
Let $q\geq 1$. Assume that conditions \ref{F} and \ref{h0} are satisfied, $T(s)G\in\Ca{L}(H,E)$ for all $s>0$ and $Y$ is an $E$-valued $H$-strongly measurable adapted process with locally Bochner square integrable trajectories a.s.
The following assertions are equivalent:
\renewcommand\theenumi {\roman{enumi}}
\renewcommand\labelenumi{(\theenumi)}
\begin{enumerate}
    \item[(1)] For every $t>0$ and all $y\in L^q((\Omega,\F_0);E)$ there exists a unique weak solution   $Y(\cdot;y)$  to (SCPa) in $\Bo{SL}_{\Ca{F}}^q(0,t;E)$;
    \item[(2)] The function $u\mapsto T(u)G$ represents an element in $\gamma(L^2(0,t;H),E)$ for some $t>0$.
\end{enumerate}
Moreover, if the solution $Y$ exists then there exists  $L>0$ such that for all
$x,y\in L^{q}(\Omega;E)$ and  $s\geq 0$:
\begin{align*}
&\sup_{s\in[0,t]}\E \ \norm{Y(s;x)}^{q}\leq L(1+ \E\ \norm{x}^{q}),\\
&\sup_{s\in[0,t]}\E \ \norm{Y(s;x)-Y(s;y)}^{q}\leq L\ \E\ \norm{x-y}^{q};
\end{align*} 
 the probability distribution of $Y(s;x)$ does not depend on cylindrical Wiener process $W_H$ and the underlying probability space.
\end{thm}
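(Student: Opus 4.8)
The plan is to reduce everything to \emph{mild} solutions and then run a Banach fixed point argument. Since the hypotheses of Theorem \ref{t:existence} are exactly those of Corollary \ref{c:equivSol}, and membership of $Y$ in $\Bo{SL}_{\Ca{F}}^q(0,t;E)$ forces $\sup_{s\in[0,t]}\E\n Y(s)\n_E^q<\infty$ (hypothesis (i) of that corollary), a process $Y$ is a weak solution of (SCPa) if and only if it is a mild solution, i.e.
\begin{align*}
Y(t)=T(t)y+\int_0^t T(t-r)F(Y(r))\,dr+\int_0^t T(t-r)G\,dW_H(r)\quad\text{a.s.}
\end{align*}
Because the noise is additive, the stochastic convolution $S(t):=\int_0^t T(t-u)G\,dW_H(u)$ does not depend on $Y$; both implications therefore turn on the single question of whether this convolution is a well-defined element of $L^q(\Omega;E)$, which by the $\gamma$-characterisation of stochastic integrability and the inequality \eqref{BDG} is equivalent to $u\mapsto T(u)G$ representing an element of $\gamma(L^2(0,t;H),E)$.

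For (2)$\Rightarrow$(1) I would assume $u\mapsto T(u)G\in\gamma(L^2(0,t;H),E)$, which by Remark \ref{r:gammaTG} then holds on every interval, and set up the solution map
\begin{align*}
(\Phi Y)(t)=T(t)y+\int_0^t T(t-r)F(Y(r))\,dr+S(t)
\end{align*}
on $\Bo{SL}_{\Ca{F}}^q(0,t;E)$. That $\Phi$ maps this space into itself follows from three estimates: the initial term is controlled by $\n T(t)\n_{\Ca{L}(E)}$ and $y\in L^q$; the drift convolution is estimated exactly as the term $\xi$ in the proof of Theorem \ref{t:varcons}, using the growth bound $\n T(t)F(x)\n_E\le a(t)(1+\n x\n_E)$ from \ref{F} together with Minkowski's integral inequality; and $S\in\Bo{SL}_{\Ca{F}}^q(0,t;E)$ follows from \eqref{BDG}, since for the deterministic integrand $T(t-\cdot)G$ the associated $R_{\Psi}$ is deterministic, so $S(t)$ is Gaussian with $\gamma$-norm equal to $\n u\mapsto T(u)G\n_{\gamma(L^2(0,t;H),E)}$ and hence has finite moments of every order. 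The contraction estimate
\begin{align*}
\n(\Phi Y_1)(t)-(\Phi Y_2)(t)\n_E\le\int_0^t a(t-r)\n Y_1(r)-Y_2(r)\n_E\,dr
\end{align*}
comes from the Lipschitz bound $\n T(t)(F(x)-F(y))\n_E\le a(t)\n x-y\n_E$; since $a\in L^1_{loc}$ the kernel need not be small, so I would iterate $\Phi$ and bound the $n$-fold composition by the $n$-fold convolution of $a$, which tends to zero, so that some power $\Phi^n$ is a strict contraction. The Banach fixed point theorem then yields a unique mild, hence weak, solution in $\Bo{SL}_{\Ca{F}}^q(0,t;E)$.

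For (1)$\Rightarrow$(2) I would apply the assumed existence with $y=0$: the resulting weak solution is mild, so $S(t)=Y(t)-\int_0^t T(t-r)F(Y(r))\,dr$, and the right-hand side lies in $L^q(\Omega;E)$ by the growth bound on $F$ and $Y\in\Bo{SL}_{\Ca{F}}^q$. This is exactly the situation covered by the ``moreover'' clause of Corollary \ref{c:equivSol}, which gives that $u\mapsto T(u)G$ represents an element of $\gamma(L^2(0,t;H),E)$ (the reflection $u\mapsto t-u$ being an isometry of $L^2(0,t;H)$), i.e. (2). The growth and Lipschitz estimates follow from the mild formula: for the growth bound one takes $L^q(\Omega)$-norms, uses \ref{F} and \eqref{BDG}, and closes with Gronwall's inequality for the singular kernel $a$; for the Lipschitz bound one subtracts the two mild solutions, observes that the additive convolution cancels, and applies the Lipschitz part of \ref{F} and Gronwall again. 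The law statement follows because the Picard iterates approximating $Y(\cdot;x)$ are measurable functionals of $x$ and $W_H$ whose laws depend only on the law of $x$ and the deterministic data (the law of $S(t)$ being the centred Gaussian determined by the $\gamma$-structure of $u\mapsto T(u)G$, independent of the realisation of $W_H$), and $L^q$-convergence preserves laws. The main obstacle is the fixed point step in (2)$\Rightarrow$(1): because $a$ is merely locally integrable rather than bounded, the naive small-time contraction fails and one must control iterated convolution kernels to obtain that a high power of $\Phi$ is a contraction; the same singular-kernel Gronwall estimate is what makes the moment and Lipschitz bounds in the ``moreover'' part go through.
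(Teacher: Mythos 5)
Your proposal is correct and follows essentially the same route as the paper: reduce to mild solutions via Corollary \ref{c:equivSol}, run a Banach fixed-point argument for (2)$\Rightarrow$(1) on $\Bo{SL}_{\Ca{F}}^q(0,t;E)$, and extract (1)$\Rightarrow$(2) and the final moment/law statements from the ``moreover'' clause of that corollary and standard estimates. The only cosmetic difference is in handling the non-small $L^1$ kernel $a$: where you iterate the solution map and bound $n$-fold convolutions so that a power of it contracts, the paper makes the map a strict contraction outright by using the Bielecki norm $\sup_{s\in[0,t]}e^{-\beta s}(\E\|Y(s)\|^q)^{1/q}$ with $\beta$ large, and the two devices are interchangeable since $\|a^{*n}\|_{L^1(0,t)}\le e^{\beta t}\bigl(\int_0^t e^{-\beta u}a(u)\,du\bigr)^n$.
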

\begin{proof}
The implication (1)$\Rightarrow$(2) follows from the second part of Corollary \ref{c:equivSol}.

For the proof of implication (2)$\Rightarrow$(1) let us fix  $q\geq 1$, $t>0$ and $y\in L^q((\Omega,\F_0);E)$ and assume (2). By Remark \ref{r:gammaTG} the condition (2) holds for all $t>0$. We define a mapping $\Ca{K}$ by 
$$\mathcal{K}(Z)(s)=T(s)y+\int_0^sT(s-u)F(Z(u))du+\int_0^s T(s-u)GdW_H(u)$$ for all $Z\in \Bo{SL}_{\Ca{F}}^{q}(0,t;E)$. The symbol $\Bo{SL}_{\Ca{F}}^{q}(0,t;E)$ stands for the Banach space of strongly measurable, adapted process $Y$ with the Bilecki's type norm 
$\n Y \n_{\beta}=\sup_{s\in[0,t]} e^{-\beta s}\lee \E\ \n Y(s)\n_{\Ep{p}}^{q}\p^{\inv{q}}$ for some $\beta>0$. 
By assumptions, it follows that both stochastic and Bochner integrals in the definition of $\Ca{K}$ are well defined.  The first term of $\Ca{K}$ is continuous a.s. Corollary 6.5 in~\cite{vanNeervenVeraarWeis2} yields that the stochastic convolution in  $\Ca{K}$ is a continuous process in $q$-th moment. Moreover, by \ref{F} and the Minkowski's integral inequality for all $Z\in \Bo{SL}_{\Ca{F}}^{q}(0,t;E)$ and for every $s\in[0,t]$ one gets   
\begin{align*}
e^{-s\beta}&\lee\E\ \norm{\int_0^s T(s-u)F(Z(u))du}_E^q\p^\inv{q}\leq \notag\\
&\leq e^{-s\beta}\int_0^s a(s-u)e^{\beta u} e^{-\beta u}\lee\E\ \lee 1+\norm{ Z(u)}_{E}\p^q\p^\inv{q}du \notag\\
 \notag
&\leq\lee 1+\n Z \n_{\beta}\p \int_0^s a(u)e^{-\beta u}du.
\end{align*}
Hence
\begin{align}
\label{ist:bochner1}
\sup_{s\in[0,t]}e^{-s\beta}\lee\E\ \norm{\int_0^s T(s-u)F(Z(u))du}^q\p^\inv{q}\leq C_{\beta,a}\lee 1+\norm{ Z }_{\beta}\p, 
\end{align}
where $C_{\beta,a}=\int_0^t \tilde{a}(u)e^{-\beta u}du$.
Between the same lines using \ref{F} for all $Z_1,Z_2\in \Bo{SL}_{\Ca{F}}^{q}(0,t;E)$ one has
\begin{align}\label{ist:bochner2}
\sup_{s\in[0,t]}\norm{\int_0^s T(s-u)\lee F(Z_1(u))-F(Z_1(u))\p du}_\beta\leq C_{\beta,a}\norm{ Z_1 -Z_2}_{\beta}. 
\end{align}
Hence for $\beta>0$ large enough the operator $\Ca{K}$ is a strict contraction in  $\Bo{SL}_{\Ca{F}}^{q}(0,t;E)$. Therefore, the existence and uniqueness results follows by  the Banach fixed-point theorem and by Corollary \ref{c:equivSol}.   
Analysis similar to that in the proof of Theorem 9.29 in \cite{PeszZab} shows 
that the second part of theorem is true.
\end{proof}

Using the factorization
method as introduced in Section 2 of \cite{DaKwaZab87} and Theorem 3.4 in \cite{coxgorajski} (see also Theorem 3.3 in \cite{VeraarZimmerschied}) we obtain sufficient condition for continuity of a solution to (SCPa).
\begin{thm}\label{t:reg_aSPC}
Let $q>2$. Under the hypotheses of Theorem \ref{t:existence}, if, in addition,  there exists $\alpha\in(\inv{q},\inv{2})$ such that for all $t>0$ the function $a$ (see assumption \ref{F}) satisfies
\begin{align*}
&\int_0^ta(s)s^{-\alpha}ds<\infty, \\
&\sup_{s\in[0,t]}\n u\mapsto (s-u)^{-\alpha}T(s-u)G\n_{\gamma(L^2(0,s;H),E)}<\infty,
\end{align*}
then for all $y\in L_{\Ca{F}_0}^q(\Omega;E)$  the weak solution $Y=Y(;y)$ of (SCPa) belongs to  $L^q(\Omega;C([0,t];E))$.
Moreover, there exists  $L>0$ such that 
for all $x,y\in L^{q}(\Omega;E)$
\begin{align}\label{r:ineqPocz1}
&\E \ \sup_{s\in[0,t]}\norm{Y(s;x)}_{E}^{q}\leq L \big(1+ \E\ \norm{x}_{L^{q}(\Omega;E)}^{q}\big),\\
&\E \ \sup_{s\in[0,t]}\norm{Y(s;x)-Y(s;y)}_{E}^{q}\leq L\ \E\ \norm{x-y}_{L^{q}(\Omega;E)}^{q}. \label{r:ineqPocz2}
\end{align}
\end{thm}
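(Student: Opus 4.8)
The plan is to combine the existence and a~priori moment estimates already furnished by Theorem~\ref{t:existence} with the factorization method of~\cite{DaKwaZab87}. By Theorem~\ref{t:existence} and Corollary~\ref{c:equivSol}, under the stated hypotheses the weak solution $Y=Y(\cdot\,;y)$ exists, coincides with the mild solution, and satisfies $\sup_{s\in[0,t]}\E\,\n Y(s)\n_E^q<\infty$; in particular $Y\in L^q(\Omega;L^q(0,t;E))$. Writing the mild formula
$$Y(s)=T(s)y+\int_0^s T(s-u)F(Y(u))\,du+\int_0^s T(s-u)G\,dW_H(u),$$
it suffices to show that each of the three terms belongs to $L^q(\Omega;C([0,t];E))$, the first being continuous by strong continuity of $(T(s))_{s\geq0}$. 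The two claimed bounds \eqref{r:ineqPocz1}--\eqref{r:ineqPocz2} will then fall out of the norm estimates produced along the way.

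For the stochastic convolution I would introduce the intermediate process
$$Z(u):=\int_0^u (u-r)^{-\alpha}T(u-r)G\,dW_H(r),$$
which is well defined precisely because the second hypothesis guarantees $u\mapsto (u-r)^{-\alpha}T(u-r)G$ represents an element of $\gamma(L^2(0,u;H),E)$ with norm bounded uniformly in $u\in[0,t]$. Since $G$ is deterministic, this $\gamma$-radonifying norm is deterministic, so the Burkholder--Davis--Gundy inequality~\eqref{BDG} gives $\sup_{u\in[0,t]}\E\,\n Z(u)\n_E^q<\infty$, whence $Z\in L^q(\Omega;L^q(0,t;E))$. Using the beta-function identity $\int_u^s(s-\sigma)^{\alpha-1}(\sigma-u)^{-\alpha}d\sigma=\pi/\sin(\pi\alpha)$ together with the semigroup law and the stochastic Fubini theorem of~\cite{vanNeervenVeraar_Fub} (exactly as in Theorem~3.4 of~\cite{coxgorajski}) one recovers the factorization identity
$$\int_0^s T(s-u)G\,dW_H(u)=\frac{\sin(\pi\alpha)}{\pi}\int_0^s (s-u)^{\alpha-1}T(s-u)Z(u)\,du.$$
Because $\alpha>\inv{q}$, the factorization operator $f\mapsto\int_0^{\cdot}(\cdot-u)^{\alpha-1}T(\cdot-u)f(u)\,du$ maps $L^q(0,t;E)$ boundedly into $C([0,t];E)$ (the classical factorization lemma, which uses only boundedness of the semigroup on $[0,t]$), so the stochastic convolution lies in $L^q(\Omega;C([0,t];E))$ with the required control of $\E\,\sup_{s}\n\cdot\n_E^q$.

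The deterministic convolution is treated in the same spirit: set $\tilde Z(u):=\int_0^u(u-r)^{-\alpha}T(u-r)F(Y(r))\,dr$ and estimate, using \ref{F}, $\n\tilde Z(u)\n_E\leq\int_0^u (u-r)^{-\alpha}a(u-r)\lee 1+\n Y(r)\n_E\p\,dr$. Young's convolution inequality, with the first hypothesis $\int_0^t a(s)s^{-\alpha}\,ds<\infty$ supplying an $L^1(0,t)$ kernel, yields $\tilde Z\in L^q(\Omega;L^q(0,t;E))$; the identical factorization identity and factorization lemma then place $\int_0^{\cdot}T(\cdot-u)F(Y(u))\,du$ in $L^q(\Omega;C([0,t];E))$. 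Collecting the three contributions proves $Y\in L^q(\Omega;C([0,t];E))$ and, upon taking suprema and invoking the moment bound of Theorem~\ref{t:existence}, inequality~\eqref{r:ineqPocz1}. For~\eqref{r:ineqPocz2} one repeats the argument with $Y(\cdot\,;x)-Y(\cdot\,;y)$: here the additive stochastic convolutions cancel, leaving only $T(\cdot)(x-y)$ and a deterministic convolution to which the Lipschitz bounds in \ref{F} apply, so that the linear-growth ``$1+\n\cdot\n$'' terms are replaced by genuine differences and no additive constant survives.

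The step I expect to be most delicate is the rigorous justification of the factorization identity in the \textsc{umd}$^-$/$\gamma$-radonifying framework: one must confirm that $u\mapsto(u-r)^{-\alpha}T(u-r)G$ genuinely defines a stochastically integrable process (which is exactly what the second hypothesis, together with $\alpha<\inv{2}$ controlling the kernel singularity, guarantees) and that the stochastic Fubini theorem legitimately interchanges the $d\sigma$-integral against $(s-\sigma)^{\alpha-1}$ with the stochastic integral. Once this identity is secured, the remainder reduces to the deterministic factorization lemma and routine applications of~\eqref{BDG}, Young's inequality, and the a~priori estimate of Theorem~\ref{t:existence}.
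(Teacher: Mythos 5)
Your proposal is correct and follows essentially the same route the paper takes: the paper does not write out a detailed proof but explicitly derives the theorem from the factorization method of Da Prato--Kwapie\'n--Zabczyk together with Theorem 3.4 of \cite{coxgorajski} and Theorem 3.3 of \cite{VeraarZimmerschied}, which is precisely the argument you reconstruct (the intermediate processes $Z$, $\tilde Z$, the beta-function identity with stochastic Fubini, and the $L^q\to C$ factorization lemma under $\alpha>\inv{q}$). Your identification of the delicate point --- justifying the factorization identity in the $\gamma$-radonifying framework --- matches where the cited references do the real work.
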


\subsection{Stochastic delay evolution equation}
\renewcommand\theenumi {\roman{enumi}}
\renewcommand\theenumii {\alph{enumii}}
\renewcommand\labelenumi{(\theenumi) }

 Using Theorems \ref{t:existence} and \ref{t:reg_aSPC} (see also Corollaries 3.12 and 3.13 in \cite{GorajskiSDE}) we obtain the proposition. 
\begin{prop}\label{p:}
Let $q\geq 1$ and $p\geq 1$. Assume that conditions \ref{hphi} and \ref{h0} with $A:=B$ are satisfied, $S(s)\psi\in\Ca{L}(H,E)$ for all $s>0$ and $X$ is an $E$-valued $H$-strongly measurable adapted process with locally Bochner $p\maxsym 2$-power integrable trajectories a.s.\\ \noindent
Then, the hypothesis \ref{hpsi} holds if and only if for all $t>0$ and all $x\in L_{\Ca{F}_0}^{p\maxsym q}(\Omega;E)$ and $f\in L_{\Ca{F}_0}^{q}(\Omega;L^p(-1,0;E))$   there exists a unique weak solution   $X(\cdot;x,f)$  to \eqref{SDEa} in the Banach space $\Bo{SL}_{\Ca{F}}^{p\maxsym q}(0,t;E)$.\\ \noindent
Moreover, if the solution $X(\cdot;x,f)$ exists, then it satisfies, almost surely, 
\begin{align}\label{SDEsol}
X(t) = S(t)x+\int_{0}^{t} S(t-s)\phi(X(s),X_s)ds+ \int_{0}^{t} S(t-s)\psi dW_H(s),  
\end{align}
the dependence of $X(;x,f)$ on initial conditions as in Theorem \ref{t:existence} holds and for all $s\geq 0$ the probability distribution of $X(s;x,f)$ does not depend on cylindrical Wiener process $W_H$ and the underlying probability space.

Furthermore, if $(q\maxsym p)>2$ and there exists $\alpha\in(\inv{q},\inv{2})$ such that
\begin{align}\label{hpsi_alpha_1}
&\int_0^ta(s)s^{-\alpha}ds<\infty, \\
\textrm{where the function }  a &\textrm{ is defined  in assumption \ref{hphi} and} \notag\\\label{hpsi_alpha_2}
\sup_{s\in[0,t]}\n &u\mapsto (s-u)^{-\alpha}S(s-u)\psi\n_{\gamma(L^2(0,s;H),E)}<\infty;
\end{align}
then the weak solution $X(\cdot;x,f)$ to \eqref{SDEa} belongs to $L^{p\maxsym q}(\Omega;C([0,t];E))$ and the inequities of type \eqref{r:ineqPocz1}-\eqref{r:ineqPocz2} hold. 
\end{prop}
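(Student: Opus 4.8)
The plan is to reduce the delay equation \eqref{SDEa} to the abstract stochastic Cauchy problem \eqref{SCPDa} with additive noise on the product space $\Ep{p}=E\times L^p(-1,0;E)$, and then to quote the existence, uniqueness and regularity results for (SCPa) established in Theorems \ref{t:existence} and \ref{t:reg_aSPC}. By Theorem \ref{t:rep}.(\ref{repr1}), if $X$ is a weak solution to \eqref{SDEa} then $Y(t)=[X(t),X_t]'$ is a weak solution to \eqref{SCPDa}; conversely, under the standing hypotheses, Theorem \ref{t:rep}.(\ref{repr2}) recovers a weak solution $X=\pi_1 Y$ to \eqref{SDEa} from a weak solution $Y$ to \eqref{SCPDa}. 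Thus the two notions of weak solution are in bijection, and this bijection respects the relevant norms: membership of $X$ in $\Bo{SL}_{\Ca{F}}^{p\maxsym q}(0,t;E)$ corresponds to membership of $Y$ in $\Bo{SL}_{\Ca{F}}^{p\maxsym q}(0,t;\Ep{p})$, the segment process $t\mapsto X_t$ inheriting its $L^p$-integrability in space from $X$ (as in \cite{GorajskiSDE}). Hence it suffices to run the analysis at the level of \eqref{SCPDa}.

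First I would verify that the data of \eqref{SCPDa} meet the hypotheses of Theorems \ref{t:existence} and \ref{t:reg_aSPC}. Hypothesis \ref{h0} for the delay generator $(A,D(A))$ and the embeddings $D(A)\subset\Ep{p}\subset\tilde{\Ep{p}}$ is standard, while \ref{hphi} yields \ref{F} for $F=[\phi,0]'$ via Lemma 3.1 in \cite{GorajskiSDE}, using that $T(t)F(\mathcal{X})=[S(t)\phi(\mathcal{X}),\Ca{S}_t\phi(\mathcal{X})]'$ has both coordinates in the right spaces with the stated bounds. The crux is to match the stochastic-integrability condition of Theorem \ref{t:existence}, namely that $u\mapsto T(u)G$ represent an element of $\gamma(L^2(0,t;H),\Ep{p})$, with hypothesis \ref{hpsi}. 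Writing $T(u)G=[S(u)\psi,\Ca{S}_u\psi]'$ and using the splitting $\gamma(L^2(0,t;H),\Ep{p})\simeq\gamma(L^2(0,t;H),E)\times\gamma(L^2(0,t;H),L^p(-1,0;E))$, the first coordinate is exactly \ref{hpsi}, while the second is treated by the $\gamma$-Fubini identification $\gamma(L^2(0,t;H),L^p(-1,0;E))\simeq L^p(-1,0;\gamma(L^2(0,t;H),E))$: the $\theta$-slice of $u\mapsto\Ca{S}_u\psi$ is a shift of $u\mapsto 1_{[0,t+\theta]}(u)S(u)\psi$, whose $\gamma$-norm is, by Remark \ref{r:gammaTG}, finite and bounded uniformly in $\theta\in(-1,0)$ once \ref{hpsi} holds. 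So the second coordinate is $\gamma$-radonifying too, and the two integrability conditions are equivalent.

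With these identifications, Theorem \ref{t:existence} applied to \eqref{SCPDa} shows that \ref{hpsi} is equivalent to the existence and uniqueness of a weak solution $Y(\cdot;[x,f]')$ in $\Bo{SL}_{\Ca{F}}^{p\maxsym q}(0,t;\Ep{p})$ for every admissible datum; transporting this back through the bijection of Theorem \ref{t:rep} yields the asserted equivalence for \eqref{SDEa}. The mild formula \eqref{SDEsol} is then the first coordinate of the variation-of-constants identity \eqref{voc2} for $Y$, read off via $\pi_1$. The a priori moment bounds and the Lipschitz dependence on the initial data descend from the corresponding estimates for $Y$ in Theorem \ref{t:existence} after applying the bounded projection $\pi_1:\Ep{p}\to E$ (so that $\n X(s;x,f)\n_E\le\n Y(s)\n_{\Ep{p}}$), and the independence of the law of $X(s;x,f)$ from $W_H$ and from the underlying probability space is inherited from the same property for $Y$.

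Finally, for the regularity statement I would invoke Theorem \ref{t:reg_aSPC} for \eqref{SCPDa}. Condition \eqref{hpsi_alpha_1} is precisely the weighted integrability of the kernel $a$ from \ref{hphi} required for $F$, while \eqref{hpsi_alpha_2} supplies the weighted bound $\sup_{s\in[0,t]}\n u\mapsto(s-u)^{-\alpha}S(s-u)\psi\n_{\gamma(L^2(0,s;H),E)}<\infty$ for the $E$-coordinate; the same slice plus $\gamma$-Fubini argument promotes this to the weighted bound for $u\mapsto(s-u)^{-\alpha}T(s-u)G$ in $\gamma(L^2(0,s;H),\Ep{p})$. Theorem \ref{t:reg_aSPC} then gives $Y\in L^{p\maxsym q}(\Omega;C([0,t];\Ep{p}))$, and continuity of $\pi_1$ delivers $X=\pi_1 Y\in L^{p\maxsym q}(\Omega;C([0,t];E))$ together with the pathwise estimates of type \eqref{r:ineqPocz1}--\eqref{r:ineqPocz2}. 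The main obstacle throughout is the transfer of the $\gamma$-radonifying property between $E$ and the product space $\Ep{p}$, in particular controlling the off-diagonal term $\Ca{S}_u\psi$ in $L^p(-1,0;E)$ and, for the last part, its weighted version; this is exactly where the $\gamma$-Fubini identification and the uniform-in-$s$ consequence of Remark \ref{r:gammaTG} do the essential work.
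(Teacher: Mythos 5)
Your proposal is correct and follows essentially the same route as the paper: reduction to the product-space problem \eqref{SCPDa} via Theorem \ref{t:rep}, a $\gamma$-Fubini/slice argument identifying \ref{hpsi} with the condition that $u\mapsto T(u)G$ be $\gamma$-radonifying into $\Ep{p}$ (the paper phrases the slice as composition with a contractive shift operator $P_\theta$ and uses the ideal property), and then an appeal to Theorems \ref{t:existence} and \ref{t:reg_aSPC}. The one detail worth making explicit in the weighted case is that the $\theta$-slice of $u\mapsto(s-u)^{-\alpha}T(s-u)G$ carries the weight $(s-u)^{-\alpha}$ rather than the shifted weight $(s-u+\theta)^{-\alpha}$, which the paper resolves with the elementary inequality $(s-u)^{-\alpha}\le(s-u+\theta)^{-\alpha}$ for $u\le s+\theta$, $\theta\le 0$, before applying the ideal property.
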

\begin{proof}
In the proof we use Theorem \ref{t:rep} and then we apply Theorems \ref{t:existence} and \ref{t:reg_aSPC} to problem \eqref{SCPDa}. Let $t>0$. We shall now check the assumptions of these theorems. From the $p\maxsym 2$-power integrability of the process $X$ it follows, by 
Remark 4.7 in \cite{coxgorajski}, that a weak solution $Y=[X,X_t]'$ to \eqref{SCPDa} has square integrable trajectories a.s. Moreover, by Minkowski's integral inequality if $X\in \Bo{SL}_{\Ca{F}}^{p\maxsym q}(0,t;E)$, then $Y=[X,X_t]'\in \Bo{SL}_{\Ca{F}}^{p\maxsym q}(0,t;E)$ (see the proof of Corollary 3.12 in \cite{GorajskiSDE}). Finally, notice that from \ref{hphi} it follows that condition \ref{F} is satisfied for  $F=[\phi,0]'$. 

What is left to show is that  $[0,t]\ni u\mapsto T(u)[\psi,0]'$ represents an operator $[R_{\pi_1},R_{\pi_2}]'$ in  $\gamma(L^2(0,t;H),\Ep{p})$ if and only if condition  \ref{hpsi} holds. Indeed, by the properties of delay semigroup $(T(t))_{t\geq 0}$ (see (9) in \cite{GorajskiSDE} and Proposition 3.11 in \cite{Batkai2005}) we have $\pi_1T(u)[\psi,0]^{'}=S(u)\psi$ and $(\pi_2T(u)[\psi,0]^{'})(\theta)=1_{(u+\theta>0)}S(u+\theta)\psi$ for every $u>0$ and a.e. $\theta\in[-1,0]$. Hence by \ref{hpsi} it follows that $R_{\pi_1} \in \gamma(L^2(0,t;H),E)$.  
Furthermore, for all $f\in L^2(0,t;H)$ using Lemma 3.4 in \cite{GorajskiSDE} we obtain
\begin{align}\label{r:Rp2}
 \lee R_{\pi_2} f\p(\theta)=&\lee\int_0^t \pi_2\Ca{T}(u)[\psi f(u),0]^{'} du\p(\theta)=\int_{-\theta}^tS(u+\theta)\psi f(u)du\\
=&\int_0^tS(u)\psi P_{\theta}f(u)du=R_{\pi_1}P_\theta f, \quad \text{a.e. }\theta \in[-1,0], \notag
\end{align}
where $P_{\theta}\in\Ca{L}(L^{2}(0,t;H))$ for all $\theta\in[-1,0]$ is defined by $$(P_{\theta}f)(u)=1_{(0,t+\theta)}(u)f(u-\theta)\quad  \text{a.e. } u\in[0,t].$$
By  $\gamma$-Fubini isomorphism (see Proposition 2.6 in \cite{vanNeervenVeraarWeis})  $$R_{\pi_2}\in\gamma(L^2(0,t;H),L^p(-1,0;E))$$ if and only if
$$\int_{-1}^0\norm{ \lee R_{\pi_2}\cdot\p(\theta)}^p_{\gamma(L^2(0,t;H),E)}d\theta<\infty.$$   
Since  $\n P_\theta\n_{\Ca{L}(L^{2}(0,t;H))}\leq 1$ and using the ideal property of $\gamma$-radonifying operators, form \eqref{r:Rp2} we get
\begin{align*}
\int_{-1}^0\norm{ \lee R_{\pi_2}\cdot\p(\theta)}^p_{\gamma(L^2(0,t;H),E)}d\theta&\leq\int_{-1}^0\norm{ R_{\pi_1}}^p_{\gamma(L^2(0,t;H),E)}\norm{ P_\theta}^p_{\Ca{L}(L^{2}(0,t;H))}d\theta\\&\leq
\norm{R_{\pi_1}}^p_{\gamma(L^2(0,t;H),E)}<\infty.
\end{align*}
\par
To prove the last assertion of theorem we show that for all $\alpha>0$  the equivalence 
\begin{align}
&\sup_{s\in[0,t]}\norm{ u\mapsto (s-u)^{-\alpha}S(s-u)\psi}_{\gamma(L^2(0,s;H),E)}<\infty \label{alpha1}\\ &\hspace{3cm}\Updownarrow \notag \\ &\sup_{s\in[0,t]}\norm{ u\mapsto (s-u)^{-\alpha}T(s-u)G}_{\gamma(L^2(0,s;H),\Ep{p})}<\infty\label{alpha2}
\end{align} 
holds.
It is clear that the implication \eqref{alpha2} $\Rightarrow$ \eqref{alpha1} is true. 
For fixed  $s\leq t$ let us denote by $[R_{s,\alpha,\pi_1},R_{s,\alpha,\pi_2}]^{'}$ the operator in $\gamma(L^2(0,s;H),\Ep{p})$ which is represented by  $$[0,s]\ni u \mapsto (s-u)^{-\alpha} T(s-u)G=\left[(s-u)^{-\alpha} S(s-u)\psi,(s-u)^{-\alpha} \Ca{S}_{s-u}\psi\right]',$$
where $\Ca{S}_{s}$ is defined by \eqref{S_t}.  Then, in much the same way as in  \eqref{r:Rp2} for all $f\in L^2(0,s;H)$ and a.e. $\theta \in[-1,0]$ we get
\begin{align*}
 \lee R_{s,\alpha,\pi_2} f\p(\theta)=&\lee\int_0^s (s-u)^{-\alpha}\pi_2 T(s-u)Gf(u) du\p(\theta)\\&=\int_{0}^{s+\theta}(s-u)^{-\alpha}S(s-u+\theta)\psi f(u)du, 
\end{align*}
Since $(s-u)^{-\alpha}\leq (s-u+\theta)^{-\alpha}$ for all $u\leq s+\theta$, by the ideal property of $\gamma$-radonifying operators for a.e $\theta \in[-1,0]$ we obtain
$$\norm{\lee R_{s,\alpha,\pi_2} \cdot\p(\theta)}_{\gamma(L^2(0,s;H),E)}\leq \norm{(R_{s+\theta,\alpha,\pi_1} \cdot)(\theta)}_{\gamma(L^2(0,s+\theta;H),E)}. $$
Hence
\begin{align*}
\int_{-1}^0\n (R_{s,\alpha,\pi_2}\cdot)(\theta)\n^p_{\gamma(L^2(0,s;H),E)}d\theta&\leq\int_{-1}^0\n R_{s+\theta,\alpha,\pi_1}\cdot\n^p_{\gamma(L^2(0,s+\theta);H,E)}d\theta\\&\leq
\sup_{s\in[0,t]}\n R_{s,\alpha,\pi_1}\n^p_{\gamma(L^2(0,s;H),E)}<\infty.
\end{align*}
  \end{proof}

\subsection{Examples}

\subsubsection{Stochastic transport equation with delay.}
Let $E=\tilde{E}=C_0([0,1])=\{ f\in C([0,1]):f(0)=0 \}$. Consider the following stochastic transport equation with delay in $C_0([0,1])$:
\begin{align}
\left\{ \begin{array}{l} dy(t,\xi)=( -\frac{\partial y(t,\xi)}{\partial \xi}-\mu y(t,\xi)) dt +\Big[\int_{t-1}^{t}\varphi(s-t,\xi)y(s,\xi)ds\\\quad\quad +f_1(y(t,\xi))+\int_{t-1}^{t}k(s-t,\xi)f_2(y(s,\xi))ds\Big]dt+\psi(\xi)dW(t),\quad t\geq 0;\\
\frac{\partial y(t,0)}{\partial \xi}=0, \quad y(t,0)=0;\\
y(0,\xi)=x_0(\xi), \quad y(\theta,\xi)=f_0(\theta,\xi), \quad \theta \in[-1,0], \xi\in [0,1];\label{ex_transp}
 \end{array}\right. 
\end{align}  
for the initial conditions $x_0\in C_0([0,1])$, $f_0\in L^p(-1,0;C_0([0,1]))$ and where $p\geq 1$, $\varphi,k\in C([0,1];L^{p'}(-1,0))$ for $p'\in(1,\infty]$ such that $\inv{p}+\inv{p'}=1$, and $f_1,f_2:\R\to\R$ are Lipschitz functions, and 
  $\psi\in C_0([0,1])$,  $W$ is one-dimensional Brownian motion. 
Let $B$ be a differential operator on $E=C_0(0,1)$ such that
\begin{align*}
Bx=-\frac{dx}{d\xi}-\mu x, \ D(B)=\{x\in C^{1}([0,1]):x(0)=x'(0)=0\}.
\end{align*}
 By \cite{EngNag} (see p. 86 and section 5.11)  it follows that  $(B, D(B))$ generates strongly continuous nilpotent semigroup  $(S(t))_{t\geq 0}$ on $C_0([0,1])$ such that 
\begin{align}\label{r:S}
(S(t)x)(\xi)=\eqtwo{e^{-\xi\mu }x(\xi-t)}{\xi-t\geq 0}{0}{\xi-t<0}, 
\end{align}
for all $x\in C_0([0,1])$ and all $s\in[0,1]$.
Let us introduce the notation:
\begin{align}\label{notation_transp_1}
\phi(x,h)(\xi)&=\int_{-1}^{0}\varphi(\theta,\xi)h(\theta,\xi)d\theta+f_1(x(\xi))+ \int_{-1}^{0}k(\theta,\xi)f_2(h(\theta,\xi))d\theta,\\
(\psi u)(\xi)&=\psi(\xi)u
\end{align}
for all $[x,h]'\in\Ep{p}=C_0([0,1])\times L^p(-1,0;C_0([0,1]))$, and all $u\in \R$.
Then, we can rewrite \eqref{ex_transp} in the form \eqref{SDEa}. Observe that $\phi$ is Lipschitz-continuous with the Lipschitz constant $$L=2^{\inv{p}}\lee   L_{f_{1}} \maxsym (L_{f_{2}}\len k \pn_{C([0,1];L^{p'}(-1,0;E))}+\len \varphi \pn_{C([0,1];L^{p'}(-1,0;E))})\p,$$ where $L_{f_{1}}, L_{f_{2}}$ are the Lipschitz constants of $f_1, f_2$, respectively. Hence, since $\len S(s)\pn_{\Ca{L}(C_0([0,1])}\leq 1$ for all $s>0$, $\phi$ satisfies \ref{hphi} with $a(t)=L$. \par
 Now we show that the assumption \ref{hpsi} holds.
We prove that $[0,t]\ni s \mapsto S(s)\psi \in C_0([0,1])$ represents an operator $R_{\psi,t}$ in $\gamma(L^2(0,t),C_0[0,1])$ defined as $R_{\psi,t}f=\int_0^tS(s)\psi f(s)ds$. For  $t=1$ we have
\begin{align*}
R_{\psi,1}f(\xi)=&\lee\int_0^1S(s)\psi f(s)ds\p(\xi)=\int_0^1 e^{-\xi\mu} 1_{[s,1]}(\xi)\psi (\xi-s)f(s)ds\\&=e^{-\xi\mu}\int_0^{\xi}  \psi (\xi-s)f(s)ds 
\end{align*}
and then for all $\xi\in[0,1]$
\begin{align}\label{Rgamma}
|R_{\psi,1}f(\xi)|\leq \len \psi\pn_{\infty} \int_0^{\xi} f(t)dt. 
\end{align}
Let $h_0(\xi)=1, h_{k}(\xi)=2^{\inv{2}(n-1)}\lee 1_{(\frac{2j-2}{2^{n}},\frac{2j-1}{2^{n}})}(\xi)-1_{(\frac{2j-1}{2^{n}},\frac{2j}{2^{n}})}(\xi)\p$ for all $\xi\in[0,1]$ and $k=2^{n-1}+j-1$ with $n=1,2\ldots$, $j=1,2,\ldots, 2^{n-1}$ be the Haar basis on $L^2(0,1)$. Then using \eqref{Rgamma} we obtain, for all $\xi \in [0,1]$ and $k\geq 1$,
\begin{align}\label{Rgamma0}
|R_{\psi,1}h_{k}(\xi)|&\leq \len \psi\pn_{\infty} \int_0^1 h_{k}(t) dt\leq 1_{(\frac{2j-2}{2^{n}},\frac{2j}{2^{n}})}(\xi)\len \psi\pn_{\infty}2^{\inv{2}(n-1)}\inv{2^{n}},\\ \quad \text{where } k&=2^{n-1}+j-1,\ n=1,2,\ldots, j=1,\ldots,2^{n-1}.\notag
\end{align}
Let $\{\gamma_{k}:k=0 \text{ or }k=2^{n-1}+j-1,n=1,2\ldots, j=1,2,\ldots, 2^{n-1} \}$ be a Gaussian sequence. Then for every $\xi\in [0,1] $ and all $\beta>1$ and sufficiently large $1<N< M$ such that $N=2^{n_N-1}+j_N-1$ and $M=2^{n_M-1}+j_M-1$ for some $1\leq j_N\leq 2^{n_N-1}$, $1\leq j_M\leq 2^{n_M-1}$ and $n_N,n_M\geq 1$ we have, almost surely,
\begin{align}\label{Rgamma2}
\sum_{k=N}^M|\gamma_{k}R_{\psi,1}h_{k}(\xi)|&\leq\sum_{k=N}^M\sqrt{2\beta\log(k+1)}|R_{\psi,1}h_{k}(\xi)|\\ \notag&\leq \len \psi\pn_{\infty}\sum_{n=n_N}^{m_M}\sum_{j=1}^{2^{n-1}}1_{(\frac{2j-2}{2^{n}},\frac{2j}{2^{n}})}(\xi)\sqrt{2\beta\log(j+2^{n-1})}2^{-\inv{2}n-\inv{2}}\\
&\leq
\len\psi\pn_{\infty}\sum_{n=n_N}^{m_M}\sqrt{2\beta\log(j'+2^{n-1})}2^{-\inv{2}n-\inv{2}}
\notag\\
&\leq\len\psi\pn_{\infty}\sqrt{2\beta\log 2}\sum_{n=n_N}^{m_M}2^{\inv{2}+\inv{4}n}2^{-\inv{2}n-\inv{2}}
\notag\\&=\len\psi\pn_{\infty}\sqrt{2\beta\log 2}\sum_{n=n_N}^{m_M}\lee\inv{\sqrt[4]{2}}\p^{n},\notag
\end{align}
where  we use \eqref{Rgamma0} and the following property of Gaussian sequences: for every $\beta>1$ the events $|\gamma_k|\leq \sqrt{2\beta\log(k+1)}$ hold for all but finitely many $k$ and the inequalities: $\log (2^{n-1}+j')\leq \log 2^n=n\log 2$ and $\sqrt{n}\leq 2^{\inv{2}+\inv{4}n}$ hold.
For all $N=2^{n-1}+j-1$ with $n=1,2,\ldots$ and $1 \leq j\leq 2^{n-1}$ let $S_N(\xi)=\sum_{k=1}^N\gamma_{k}R_{\psi,1}h_{k}(\xi)$, $\xi \in [0,1]$. 
Hence the sequence $(S_N)_{N\geq 1}$   converges to $Y$, almost surely, absolutely and uniformly for
all $\xi\in[0,1]$.  Since each $\xi\mapsto S_N(\xi)$ is continuous, it implies that the function $\xi\mapsto Y(\xi)$ belongs to $C[0,1]$. Moreover, in the same way as in \eqref{Rgamma2} we obtain:  
\begin{align}\label{Rgamma3}
\E\ \len Y \pn^2_\infty &=
\E\  \sup_{\xi\in[0,1]}\left|\sum_{k=1}^\infty\gamma_{k}R_{\psi,1}h_{k}(\xi)\right|^2\\&\leq   \len\psi\pn^2_{\infty}\lee\sum_{n=1}^{N-1}2^{-n-1}+ 2\beta\log 2\lee\sum_{n=N}^{\infty}\lee\inv{\sqrt[4]{2}}\p^{n}\p^2\p<\infty, \notag
\end{align}
where $N>1$ is sufficiently large.
 By \eqref{Rgamma2}-\eqref{Rgamma3} and the Ito-Nisio theorem (see Proposition 2.11 in \cite{DaPratoZabczyk})  the sequence $\lee S_n\p_{n\geq 0}$ is converged in $L^2(\Omega;C([0,1]))$ and a.s. to $Y\in L^2(\Omega;C([0,1]))$.
  
 Therefore,  $R_{\psi,1}\in\gamma(L^2(0,1);C_0([0,1]))$ and form Corollary 7.2 in  \cite{vanNeervenVeraarWeis2} it follows that $R_{\psi,t}\in\gamma(L^2(0,t);C_0([0,1]))$ for all $t>0$.
   Finally, by Proposition \ref{p:} we have the existence and uniqueness  of a weak solution to \eqref{ex_transp} in the spaces $\Bo{SL}_{\Ca{F}}^{p\maxsym q}(0,t;E)$ for every $q\geq 1$ and the weak solution satisfies 
\begin{align*}
X(t,\xi) &= 1_{[0,\infty)}(\xi-t)e^{-\xi\mu }x(\xi-t)+e^{-\xi\mu }\int_{0}^{t-\xi} \phi(X(s),X_s)(\xi-t+s)ds\\&\quad+e^{-\xi\mu}\int_0^{t\minsym \xi}\psi(\xi-s)dW(s),
\end{align*}
for all $t>0$ and $\xi \in[0,1]$.
  
\subsubsection{Stochastic McKendrick equation with delay.}
Let $E=\tilde{E}=L^1(\Ca{O})$, where $\Ca{O}=(0,\infty)$. Consider the following stochastic delay McKendrick equation in $L^1(\Ca{O})$:
\begin{align}\label{ex_McKendrick}
\left\{ \begin{array}{l} dy(t,\xi)=( -\frac{\partial y(t,\xi)}{\partial \xi}-\mu(\xi) y(t,\xi)) dt +\Big[\int_{t-1}^{t}\varphi(s-t,\xi)y(s,\xi)ds\\\quad\quad+f_1(y(t,\xi))+ \int_{t-1}^{t}k(s-t,\xi)f_2(y(s,\xi))ds\Big]dt+\psi(\xi)dW(t),\quad t\geq 0;\\
y(t,0)=\int_0^\infty b(a)y(t,a)ds;\\
y(0,\xi)=x_0(\xi), \quad y(\theta,\xi)=f_0(\theta,\xi), \quad \theta\in[-1,0], \xi\in[0,1]; 
 \end{array}\right. 
\end{align}  
for the initial conditions $x_0\in L^1(\Ca{O})$, $f_0\in L^p(-1,0;L^1(\Ca{O}))$, where $p\geq 1$, $\mu, b \in L^\infty(\Ca{O})$, $\varphi,k\in L^\infty(\Ca{O};L^{p'}(-1,0))$ for some $p'\in(1,\infty]$ such that $\inv{p}+\inv{p'}=1$, and $f_1, f_2:\R\to\R$ are Lipschitz functions, and $W$ is one-dimensional Brownian motion; $\psi\in\Ca{L}(\R{},L^1(\Ca{O}))$, $\psi h(a)=h\sigma(a)$ for all $h\in\R{}$ for some  $\sigma\in L^1(\Ca{O})$ such that $\operatorname{supp}\sigma\subset[0,d]$ $(d\in\R{}_+)$ and $\sigma\in L^2(0,d)$.

  Let $B$ be a linear operator on $L^1(\Ca{O})$ such that:
\begin{align*}
D(B)={\rm Ker} (K),\ Bg=-\frac{d}{da}g-\mu g,
\end{align*}
where  $K:W^{1,1}(\Ca{O})\to \R{}$, $Kg=g(0)-\int_{\Ca{O}}b(a)g(a)da$. Then, by Theorem 2 in~\cite{bobrowski2010} it follows that $B$ generates the McKendrick semigroup $\left(S(t)\right)_{t\geq 0}$. Hence for all $t\geq 0$
\begin{align}\label{pó³grMc}
S(t)g(a)=e^{-\int_{a-t}^a\mu(r)dr}\tilde{g}(a-t), \ a\geq 0,
\end{align} 
where $\tilde{g}(a)=g(a)$, $\tilde{g}(-a)=g_2(a)$, $a\geq 0$ and $g_2$ belongs to the weighted Banach space $$L_{w}^1(\Ca{O})=\left\{g:\Ca{O}\to\R{}:\n g\n_{L^1_w(\Ca{O})}=\int_{\Ca{O}}|g(a)|e^{-wa}da<\infty\right\}$$ for $w>\n b_\mu\n_\infty$ and satisfies, almost everywhere, the equation
\begin{align}\label{popsplot}
g_2=b_\mu\star g_2+T_{\mu,b}g,  
\end{align} 
where 
\begin{align*}
&T_{\mu,b}:L^1(\Ca{O})\to L_w^1(\Ca{O}),\ T_{\mu,b}g(s)=\int_{s}^\infty e^{-\int_{a-s}^a\mu(r)dr}g(a-s)b(a)da, \\
&b_{\mu}(s)=e^{-\int_0^s\mu(r)dr}b(s), \ s\geq 0,
\end{align*}
 and $\star$ denotes the convolution operation in $L^1(\Ca{O})$. For every $g\in L^1(\Ca{O})$ the function $\tilde{g}=(g,g_2)\in L^1(\Ca{O})\times L_w^1(\Ca{O})$ defined by \eqref{popsplot} is called $(\mu,b)$-extension of $g$. \par
Let us denote by $\Ep{p}=L^1(\Ca{O})\times L^p(-1,0;L^1(\Ca{O}))$ the state space for the delay equation \eqref{ex_McKendrick}. Let $\phi:\Ep{p}\to L^1(\Ca{O})$ be given by \eqref{notation_transp_1}. It is easy to show that $\phi$ is Lipschitz-continuous with the Lipschitz constant $$L=2^{\inv{p}}\lee   L_{f_{1}} \maxsym (L_{f_{2}}\len k \pn_{L^\infty(\Ca{O};L^{p'}(-1,0;E))}+\len \varphi \pn_{L^\infty(\Ca{O};L^{p'}(-1,0;E))})\p,$$ where $L_{f_{1}}, L_{f_{2}}$ are the Lipschitz constants of $f_1, f_2$, respectively. Hence $\phi$ satisfies \ref{hphi} with $a(t)=LS(t)$. We show in Proposition \ref{p:psi_McKen} that the assumptions \ref{hpsi} and \eqref{hpsi_alpha_1}-\eqref{hpsi_alpha_2} hold. Therefore, we can rewrite \eqref{ex_McKendrick} in the form \eqref{SDEa} and apply Proposition \ref{p:} to prove existence, uniqueness and continuity of a weak solution to \eqref{ex_McKendrick}.
 
\begin{prop}\label{p:psi_McKen}  Consider \eqref{ex_McKendrick}. Then, the operator $\psi\in\Ca{L}(\R{},L^1(\Ca{O}))$ defined by $\psi h(a)=h\sigma(a)$ for all $h\in\R{}$ and for some  $\sigma\in L^1(\Ca{O})$ such that $\operatorname{supp}\sigma\subset[0,d]$ $(d\in\R{}_+)$ and $\sigma\in L^2(0,d)$ satisfies    
\ref{hpsi} and \eqref{hpsi_alpha_1}-\eqref{hpsi_alpha_2}.
\end{prop}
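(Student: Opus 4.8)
Since $W$ is one-dimensional we have $H=\R$, so $S(u)\psi$ is the rank-one operator $h\mapsto h\,S(u)\sigma$ and the function $u\mapsto S(u)\psi$ represents $R\colon L^2(0,t)\to L^1(\Ca{O})$, $Rf=\int_0^tS(u)\sigma\,f(u)\,du$. The plan is to pass to a square-function description of the $\gamma$-norm: applying the $\gamma$-Fubini isomorphism (Proposition 2.6 in \cite{vanNeervenVeraarWeis}) with scalar fibre gives $\gamma(L^2(0,t),L^1(\Ca{O}))\cong L^1(\Ca{O};L^2(0,t))$, so that \ref{hpsi} is equivalent to
\[
\int_0^\infty\lee\int_0^t|(S(u)\sigma)(a)|^2\,du\p^{\inv{2}}da<\infty,
\]
and, after the change of variable $v=s-u$, \eqref{hpsi_alpha_2} becomes $\sup_{s\le t}\int_0^\infty(\int_0^s v^{-2\alpha}|(S(v)\sigma)(a)|^2dv)^{\inv{2}}da<\infty$. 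Using the explicit form of the McKendrick semigroup, $(S(v)\sigma)(a)=e^{-\int_{a-v}^a\mu}\tilde\sigma(a-v)$, and substituting $r=a-v$ (noting $e^{-2\int_r^a\mu}\le C_{\mu,t}$ on the integration range) reduces both conditions to integrability of the $(\mu,b)$-extension $\tilde\sigma$ against the weights $1$ and $(a-r)^{-2\alpha}$ over the windows $r\in[a-t,a]$.

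The main obstacle is that the boundary component $\sigma_2$ of $\tilde\sigma$ is a priori only known to lie in $L^1_w(\Ca{O})$ through \eqref{popsplot}, whereas the square-function estimates require local $L^2$ control of $\sigma_2$. I would upgrade this regularity directly from the renewal equation $\sigma_2=b_\mu\star\sigma_2+T_{\mu,b}\sigma$. First, since $\operatorname{supp}\sigma\subset[0,d]$ and $b,\mu\in L^\infty$, a change of variables yields $T_{\mu,b}\sigma(s)=\int_0^de^{-\int_\tau^{\tau+s}\mu}\sigma(\tau)b(\tau+s)d\tau$, which by Cauchy--Schwarz is bounded on every bounded interval, with $|T_{\mu,b}\sigma(s)|\lesssim\n b\n_\infty\sqrt d\,\n\sigma\n_{L^2(0,d)}$. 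Since $b_\mu\in L^\infty$, the convolution powers obey $\n b_\mu^{\star n}\n_{L^1(0,T)}\le\n b_\mu\n_\infty^nT^n/n!$, so the Neumann series $\sigma_2=\sum_{n\ge0}b_\mu^{\star n}\star(T_{\mu,b}\sigma)$ converges in $L^\infty(0,T)$ for every $T$; by uniqueness of the solution of \eqref{popsplot} this is the $\sigma_2$ in question. Hence $\sigma_2\in L^\infty_{\mathrm{loc}}(\Ca{O})\subset L^2_{\mathrm{loc}}(\Ca{O})$, which is the missing ingredient.

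Granting $\sigma_2\in L^\infty_{\mathrm{loc}}$, I would finish \ref{hpsi} by splitting the $a$-integral according to the sign of $r$. For $r\ge0$ one has $\tilde\sigma=\sigma$, supported in $[0,d]$, so the integrand vanishes unless $a\in[0,d+t]$ and is dominated by $C_{\mu,t}^{\inv{2}}\n\sigma\n_{L^2(0,d)}$; integration over the bounded interval is finite. For $r<0$ (which forces $a<t$) we have $\tilde\sigma(r)=\sigma_2(-r)$ and $\int_{a-t}^0|\sigma_2(-r)|^2dr\le t\,\n\sigma_2\n_{L^\infty(0,t)}^2$, again finite after integrating over $a\in(0,t)$. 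Condition \eqref{hpsi_alpha_1} is immediate, since the function $a$ from \ref{hphi} is bounded on $[0,t]$ and $\int_0^ts^{-\alpha}ds<\infty$ for $\alpha<1$.

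For \eqref{hpsi_alpha_2} the extra weight is handled in the same two regions. For the $\sigma$-part, writing $w=a-r$ the inner integral equals $(K_s\star|\sigma|^2)(a)$ with $K_s(w)=w^{-2\alpha}1_{(0,s)}(w)$; because $2\alpha<1$ we have $K_s\in L^1$, so this convolution lies in $L^1(da)$ and, being supported in the bounded set $a\in[0,d+s]$, is integrable to the power $\inv{2}$ by Cauchy--Schwarz, uniformly in $s\le t$. For the $\sigma_2$-part, putting $w=-r$ and using $(a+w)^{-2\alpha}\le w^{-2\alpha}$ gives inner integral $\le\int_0^sw^{-2\alpha}|\sigma_2(w)|^2dw\le\n\sigma_2\n_{L^\infty(0,s)}^2\int_0^sw^{-2\alpha}dw<\infty$ for $\alpha<\inv{2}$, uniformly in $s\le t$ after integrating over $a\in(0,t)$. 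Thus \ref{hpsi} and \eqref{hpsi_alpha_1}--\eqref{hpsi_alpha_2} hold for every $\alpha\in(\inv{q},\inv{2})$, the one genuinely technical point being the local boundedness of the renewal component $\sigma_2$.
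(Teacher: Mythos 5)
Your argument is correct, and its outer structure is the same as the paper's: reduce \ref{hpsi} and \eqref{hpsi_alpha_2} via the $\gamma$-Fubini isomorphism to an $L^1(\Ca{O};L^2)$ square-function condition, use the explicit form \eqref{p�grMc} of the McKendrick semigroup, split the $a$-integral according to whether $S(u)\sigma$ samples $\sigma$ itself or the boundary component $\sigma_2$ of the $(\mu,b)$-extension, and estimate each piece by Cauchy--Schwarz over a bounded range of $a$. The one place where you genuinely diverge is the key regularity lemma for $\sigma_2$. The paper restricts the renewal equation \eqref{popsplot} to the weighted space $L^2_w(0,d)$, shows $T_{\mu,b}$ maps $L^2(0,d)$ into $L^2_w(0,d)$ and that $g\mapsto b_\mu\star g$ is a strict contraction there for $w>\n b_\mu\n_\infty$ (Young's inequality), and concludes $\sigma_2\in L^2(0,d)$ (and likewise $L^2(0,s)$ for every $s$) from the Banach fixed-point theorem. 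You instead solve the Volterra equation by the Neumann series $\sum_{n\geq 0}b_\mu^{\star n}\star(T_{\mu,b}\sigma)$, using the factorial decay $\n b_\mu^{\star n}\n_{L^1(0,T)}\leq \n b_\mu\n_\infty^n T^n/n!$ together with the local boundedness of $T_{\mu,b}\sigma$; this is more elementary (no weighted norms, no dependence on the size of $w$), and it yields the strictly stronger conclusion $\sigma_2\in L^\infty_{\mathrm{loc}}(\Ca{O})$, which trivially covers every interval $(0,s)$ at once. The only step you should spell out is the identification of the Neumann-series solution with the $\sigma_2$ of \eqref{popsplot}: since the equation is of Volterra type, two locally integrable solutions on $(0,T)$ satisfy $g_1-g_2=b_\mu^{\star n}\star(g_1-g_2)$ for all $n$, whence they coincide a.e.\ on $(0,T)$; with that sentence added, your proof is complete and delivers everything the paper's does.
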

\begin{proof}
The $\gamma$-Fubini isomorphism (see Proposition 2.6 in \cite{vanNeervenVeraarWeis}) between the Banach spaces  $L^1(\Ca{O};(L^2(0,t))^*)$ and $\gamma(L^2(0,t);L^1(\Ca{O}))$ implies that to prove condition \ref{hpsi} it is enough to find $t>0$ such that
\begin{align}\label{r:istSRPzSWO}
\int_{\Ca{O}}\sup_{\n f\n_{L^2(0,t)}\leq 1}\left|\lee\int_0^t S(s)\psi f(s)ds\p(a)\right|da<\infty.
\end{align}
We take $t=d$, then for a.e. $a\geq 0$ by the Cauchy-Schwarz inequality it follows that
\begin{align*}
\left|\lee\int_0^d S(s)\psi f(s)ds\p(a)\right|\leq \n f\n_{L^2(0,d)} \lee\int_0^de^{-2\int_{a-s}^a\mu(r)dr}\tilde{\sigma}^2(a-s)ds\p^{\inv{2}}.
\end{align*}      
Hence
\begin{align}\notag
\int_{\Ca{O}}\sup_{\n f\n_{L^2(0,d)}\leq 1}&\left|\int_0^d S(s)\psi f(s)ds)(a)\right|da\\&\leq \int_\Ca{O}\lee\int_0^de^{-2\int_{a-s}^a\mu(r)dr}\tilde{\sigma}^2(a-s)ds\p^{\inv{2}}da\notag\\
\label{r:istSRPzSWO2}&\leq \int_0^d \lee\int_{0}^d e^{-2\int_{a-s}^a\mu(r)dr}\tilde{\sigma}^2(a-s)ds\p^\inv{2}da\\&\quad+ \int_d^\infty \lee\int_{0}^d e^{-2\int_{a-s}^a\mu(r)dr}\sigma^2(a-s)ds\p^\inv{2}da.\notag
\end{align}
Since $\sigma\in L^2(0,d)$, we can estimate the second integral on right hand side of \eqref{r:istSRPzSWO2} as follows
\begin{align*}
 \int_d^\infty &\lee\int_{0}^d e^{-2\int_{a-s}^a\mu(r)dr}\sigma^2(a-s)ds\p^\inv{2}da\\&=\int_d^{2d} \lee\int_{a-d}^d e^{-2\int_{s}^a\mu(r)dr}\sigma^2(s)ds\p^\inv{2}da\leq d \n\sigma\n_{L^2(0,d)}.
\end{align*}
For the first integral on right hand side of \eqref{r:istSRPzSWO2} we obtain
\begin{align*}
\int_0^d &\lee\int_{0}^d e^{-2\int_{a-s}^a\mu(r)dr}\tilde{\sigma}^2(a-s)ds\p^\inv{2}da\\&\leq
\int_0^d \lee\int_{0}^a e^{-2\int_{a-s}^a\mu(r)dr}\sigma^2(a-s)ds\p^\inv{2}da\\&\quad+\int_0^d \lee\int_{a}^d e^{-2\int_{0}^a\mu(r)dr}\sigma_2^2(s-a)ds\p^\inv{2}da\\
&\leq d \n\sigma\n_{L^2(0,d)}+d\n\sigma_2\n_{L^2(0,d)}.
\end{align*}
 We show that $\sigma_2\in L^2(0,d)$. 
  Recall that $\sigma_2\in L_w^1(\Ca{O})$ for $w>\n b_\mu\n_{\infty}$ is the solution to (see \eqref{popsplot}):
\begin{align}\label{sigma2}
\sigma_2=b_\mu\star\sigma_2 + T_{\mu, b}\sigma.
\end{align}
We denote the restriction of $T_{\mu,b}$ to $L^2(0,d)$ by $T_{\mu, b, 2}$. Then, for all $s\in[0,t]$ and $g\in L^2(0,d)$  
\begin{align}\label{C}
&T_{\mu,b,2}g(s)=\int_0^de^{-\int_{a}^{a+s}\mu(r)dr}b(a+s)g(a)da,\\
&\norm{ T_{\mu,b,2}g}_{L_w^2(0,d)}\leq \sqrt{d}\n b\n_\infty\n g\n_{L^2(0,d)},\notag
\end{align}
and hence $T_{\mu, b,2}\in\Ca{L}(L^2(0,d),L_w^2(0,d))$. Moreover, by
the Young inequality for convolutions it follows that for all $g\in L_w^2(0,d)$ we have
\begin{align*}
\n b_\mu\star g\n_{L_w^2(0,d)}\leq \n b_\mu\n_{L_w^1(0,d)} \n g\n_{L_w^2(0,d)}\leq\frac{\n b_\mu\n_\infty }{w}\n g\n_{L_w^2(0,d)},
\end{align*}
Hence for $w>\n b_\mu\n_\infty$ the mapping $L_w^2(0,d)\ni g\mapsto b_\mu\star g + T_{\mu, b,2}\sigma\in L_w^2(0,d)$ is a strict contraction, thus by the Banach fixed-point theorem there exists in $L_w^2(0,d)$ a unique solution to \eqref{sigma2}. It is clear that $L_w^2(0,d)$ and $L^2(0,d)$ are isomorphic, thus  $\sigma_2\in L^2(0,d)$. Therefore, the proof of \eqref{r:istSRPzSWO} is complete.\par

Let $\alpha\in(\inv{q\maxsym p},\inv{2})$ and $t>0$. Now we prove that
\begin{align*}
&\sup_{s\in[0,t]}\n u\mapsto (s-u)^{-\alpha}S(s-u)\psi\n_{\gamma(L^2(0,s),L^1(\Ca{O}))}<\infty.
\end{align*}
Fix $s\in[0,t]$. Notice that by $\gamma$-Fubini isomorphism we have
\begin{align*}
&\norm{ u\mapsto (s-u)^{-\alpha}S(s-u)\psi}_{\gamma(L^2(0,s),L^1(\Ca{O}))}\\&\hspace{2cm}\leq C_\gamma \int_{\Ca{O}}\sup_{\n f\n_{L^2(0,s)}\leq 1}\left|\lee\int_0^s (s-u)^{-\alpha}S(s-u)\psi f(u)du\p(a)\right|da,
\end{align*}
for some constant $C_\gamma>0$.
By the Cauchy-Schwarz inequality  
\begin{align}\label{cPop}
&\int_{\Ca{O}}\sup_{\n f\n_{L^2(0,s)}\leq 1}\left|\lee\int_0^s (s-u)^{-\alpha}S(s-u)\psi f(u)du\p(a)\right|da\\
& \notag\hspace{2cm}\leq \int_0^s \lee\int_{0}^su^{-2\alpha} e^{-2\int_{a-u}^a\mu(r)dr}\tilde{\sigma}^2(a-u)ds\p^\inv{2}da \\&\hspace{3cm}+ \int_s^\infty \lee\int_{0}^s u^{-2\alpha}e^{-2\int_{a-u}^a\mu(r)dr}\sigma^2(a-u)ds\p^\inv{2}da.\notag
\end{align}
Using the assumption $\sigma\in L^2(0,d)$, the Cauchy-Schwarz inequality and Fubini's theorem we can estimate the second integral on the right hand side of \eqref{cPop} as follows 
\begin{align*}
&\int_s^\infty \lee\int_{0}^s u^{-2\alpha}e^{-2\int_{a-u}^a\mu(r)dr}\sigma^2(a-u)ds\p^\inv{2}da\\
&\hspace{3cm}=\int_s^{s+d} \lee\int_{a-s}^{d\minsym a} (a-u)^{-2\alpha}e^{-2\int_{u}^a\mu(r)dr}\sigma^2(u)ds\p^\inv{2}da\\
&\hspace{3cm}\leq \sqrt{d}\lee\int_s^{s+d} \int_{a-s}^{d\minsym a} (a-u)^{-2\alpha}\sigma^2(u)dsda\p^\inv{2}\\
&\hspace{3cm}=\sqrt{d}\lee\int_0^{d} \sigma^2(u)\int_{s\maxsym u}^{u+s} (a-u)^{-2\alpha}dads\p^\inv{2}\\&\hspace{3cm}\leq \sqrt{d\frac{s^{1-2\alpha}-1}{1-2\alpha}}\n\sigma\n_{L^2(0,d)}.
\end{align*}
We decompose the first term on the right hand side of \eqref{cPop} as
\begin{align*}
&\int_0^s \lee\int_{0}^su^{-2\alpha} e^{-2\int_{a-u}^a\mu(r)dr}\tilde{\sigma}^2(a-u)ds\p^\inv{2}da\\&\hspace{3cm}=\int_0^s \lee\int_{0}^au^{-2\alpha} e^{-2\int_{a-u}^a\mu(r)dr}\sigma^2(a-u)ds\p^\inv{2}da\\&\hspace{3.5cm}+\int_0^s \lee\int_{a}^su^{-2\alpha} e^{-2\int_{a-u}^a\mu(r)dr}\sigma_2^2(a-u)ds\p^\inv{2}da.
\end{align*}
The Cauchy-Schwarz inquality and Fubini's theorem gives
\begin{align*}
&\int_0^s \lee\int_{0}^au^{-2\alpha} e^{-2\int_{a-u}^a\mu(r)dr}\sigma^2(a-u)ds\p^\inv{2}da\\&\hspace{1cm}\leq\sqrt{s}\lee\int_0^su^{-2\alpha}\int_0^{s-u}\sigma^2(a)dadu\p^{\inv{2}}\leq\sqrt{\frac{s(s^{1-2\alpha}-1)}{1-2\alpha}}\n\sigma\n_{L^2(0,d)}.
\end{align*}
and
\begin{align*} 
&\int_0^s \lee\int_{a}^su^{-2\alpha} e^{-2\int_{a-u}^a\mu(r)dr}\sigma_2^2(a-u)ds\p^\inv{2}da\\&\hspace{1cm}\leq\sqrt{s}\Big(\int_0^su^{-2\alpha}\int_0^{u}\sigma_2^2(a)dadu\Big)^{\inv{2}}\leq\sqrt{\frac{s(s^{1-2\alpha}-1)}{1-2\alpha}}\n\sigma_2\n_{L^2(0,s)}.
\end{align*}
Similarly as in the first part of the proof we obtain that $\sigma_2\in L^2(0,s)$ for all $s>0$. Therefore,
\begin{align*}
&\sup_{s\in[0,t]}\n u\mapsto (s-u)^{-\alpha}S(s-u)\psi\n_{\gamma(L^2(0,s),L^1(\Ca{O}))}\\&\hspace{3cm}\leq C_\gamma \sqrt{(d\maxsym t)\frac{t^{1-2\alpha}-1}{1-2\alpha}}(2\n\sigma\n_{L^2(0,d)}+\n\sigma_2\n_{L^2(0,t)})<\infty.
\end{align*}
\end{proof}

\section*{Acknowledgments}
The author wishes to thank Professor Anna Chojnowska-Michalik and the referee for many helpful suggestions and comments.

\end{document}